\documentclass[]{article}

\addtolength{\oddsidemargin}{-.3in}
\addtolength{\evensidemargin}{-.3in}
\addtolength{\textwidth}{0.6in}
\addtolength{\topmargin}{-.3in}
\addtolength{\textheight}{0.6in}

\usepackage{graphicx}
\usepackage{amsmath}
\usepackage{amssymb}
\usepackage{amsthm}
\usepackage{pxfonts}
\usepackage{enumerate}
\usepackage{color}
\usepackage{mathdots}
\usepackage{sectsty}
\usepackage{tikz}
\usepackage{adjustbox}
\usepackage{caption}
\usepackage{bbold}
\usepackage[hidelinks]{hyperref}
\allowdisplaybreaks

\sectionfont{\scshape\centering\fontsize{11}{14}\selectfont}
\subsectionfont{\scshape\fontsize{11}{14}\selectfont}
\usepackage{fancyhdr}

\newcommand\shorttitle{Random entire functions
from random polynomials with real zeros}
\newcommand\authors{Theodoros Assiotis}

\fancyhf{}

\fancyhead[C]{%
\ifodd\value{page}
  \small\scshape\authors
\else
  \small\scshape\shorttitle
\fi
}
\fancyfoot[C]{\thepage}

\pagestyle{fancy}


\newtheorem{thm}{Theorem}[section]
\newtheorem{cor}[thm]{Corollary}
\newtheorem{lem}[thm]{Lemma}
\newtheorem{defn}[thm]{Definition}
\newtheorem{rmk}[thm]{Remark}
\newtheorem{prop}[thm]{Proposition}
\newtheorem*{theorem*}{Theorem}

\title{\large \bf RANDOM ENTIRE FUNCTIONS FROM RANDOM POLYNOMIALS WITH REAL ZEROS}
\author{\small THEODOROS ASSIOTIS}
\date{}

\begin{document}

\maketitle

\begin{abstract}
We point out a simple criterion for convergence of polynomials to a concrete entire function in the Laguerre-P\'{o}lya ($\mathcal{LP}$) class (of all functions arising as uniform limits of polynomials with only real roots). We then use this to show that any random $\mathcal{LP}$ function can be obtained as the uniform limit of rescaled characteristic polynomials of principal submatrices of an infinite unitarily invariant random Hermitian matrix. Conversely, the rescaled characteristic polynomials of principal submatrices of any infinite random unitarily invariant Hermitian matrix converge uniformly to a random $\mathcal{LP}$ function. This result also has a natural extension to $\beta$-ensembles. Distinguished cases include random entire functions associated to the $\beta$-Sine, and more generally $\beta$-Hua-Pickrell, $\beta$-Bessel and $\beta$-Airy point processes studied in the literature. 
\end{abstract}

\section{Introduction}

The problem of understanding the scaling limit of eigenvalues of random matrices to some limiting random point process is one of the most fundamental in random matrix theory and has been studied for many decades \cite{ForresterBook}. On the other hand, the very natural (and more general) problem of understanding the scaling limit of the characteristic polynomial itself to some random entire function with appropriate point process of zeros has only seen progress during the last decade. This problem is also partly motivated by the connection between number theory and random matrices \cite{Montgomery,KeatingSnaith}, as the characteristic polynomial of random unitary matrices can be considered a good model for the Riemann zeta function, see the introduction of \cite{ChhaibiNajnudelNikeghbali} for more details.

As far as we are aware the first result on this problem is contained in \cite{ChhaibiNajnudelNikeghbali} where the scaling limit of the characteristic polynomial of random unitary matrices was established and a limiting entire function whose zeros are given by the determinantal point process with the sine kernel was constructed and its properties studied. The authors make use of the determinantal point process structure and of some earlier quantitative estimates from \cite{MaplesNajnudelNikeghbali} to establish their main result. This approach was extended in \cite{ChhaibiHovhannisyanEtAl} to a class of point processes called product amenable for which the limiting entire functions enjoy a principal value product representation. Moreover, convergence of characteristic polynomials of certain Wigner matrices to the entire function constructed in \cite{ChhaibiNajnudelNikeghbali} is a consequence of the results of \cite{AizenmanWarzel}, see \cite{Sodin} where this statement is made explicit. More recently in \cite{ValkoViragZeta} the authors considered the scaling limit of the circular $\beta$-ensemble (random unitary matrices correspond to $\beta=2$) characteristic polynomial and various properties of the limiting random entire function, which they called the stochastic zeta function, were studied. Their approach is based on viewing the random characteristic polynomial as a Fredholm determinant of an appropriate stochastic operator. This approach was then also followed in \cite{ValkoLin} to study two families of random entire functions arising from random matrices which we call here the stochastic Hua-Pickrell (this generalises the stochastic zeta function) and stochastic Bessel functions. Finally, the scaling limit of the characteristic polynomial of Gaussian matrices at the soft edge was studied in \cite{StochasticAiryFunction} and the so-called stochastic Airy function was constructed and studied. The authors there begin by producing a recurrence relation for the characteristic polynomials which they then go on to study as a kind of random dynamical system.

In this paper we follow a different approach from previous works, taking a viewpoint motivated by \cite{OlshanskiVershik,BorodinOlshanskiErgodic,OrbitalBeta}, to study scaling limits of characteristic polynomials of random matrices. We begin by pointing out a little framework, based on a rather simple criterion and variations of it, for proving convergence of a sequence of polynomials to a concrete entire function in  the Laguerre-P\'{o}lya class. This is the class of all entire functions which arise as uniform on compact sets limits of polynomials with only real roots. Considering this class is rather natural from the perspective of taking limits of random matrix characteristic polynomials and all random entire functions mentioned above belong to it almost surely. Some of the complex analysis results proven in the sequel seem to us that they should be classical but we have not been able to locate the exact statements in the literature (see the discussion before Proposition \ref{ConvergenceProp}). On the other hand, the fact that combined with previous probabilistic work they have highly non-trivial consequences (which, as far as we can tell, do not follow by a different method) for characteristic polynomials of random matrices, see Theorem \ref{HermitianTheorem} and Theorem \ref{InterlacingTheorem}, is novel and is the main message of this paper. Theorem \ref{HermitianTheorem} can informally be stated as follows, see the next sections for the required background:

\begin{theorem*}
Let $\mathbf{H}$ be a random infinite Hermitian matrix whose $N\times N$ principal submatrices $\mathbf{H}_N$ are unitarily invariant in law for all $N\ge 1$. Then, almost surely with respect to the law of $\mathbf{H}$, the sequence of rescaled (reverse) characteristic polynomials
\begin{equation*}
    \Psi_N\left(\frac{z}{N}\right)=\det\left(\mathbf{I}-\frac{z}{N}\mathbf{H}_N\right)
\end{equation*}
converges uniformly on compact sets in $\mathbb{C}$ to a (possibly) random Laguerre-P\'{o}lya function. Moreover, any random Laguerre-P\'{o}lya function, normalised to be $1$ at $0$, can be realised in this way.
\end{theorem*}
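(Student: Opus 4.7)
The plan is to combine the convergence criterion for polynomials (Proposition \ref{ConvergenceProp}) with the Olshanski--Vershik/Pickrell classification of $U(\infty)$-invariant probability measures on the space of infinite Hermitian matrices whose $N\times N$ principal submatrices are unitarily invariant for every $N$. Recall that the extreme points of this Choquet simplex are parametrised by tuples $\omega=(\gamma_1,\gamma_2,\alpha^+,\alpha^-)$ with $\gamma_1\in\mathbb{R}$, $\gamma_2\geq 0$, and decreasing square-summable sequences $\alpha^\pm=(\alpha_1^\pm\geq\alpha_2^\pm\geq\cdots\geq 0)$, and that these tuples are in bijection with LP functions normalised by $\Psi(0)=1$ through the Hadamard factorisation
\begin{equation*}
\Psi_\omega(z)=e^{\gamma_1 z-\gamma_2 z^2/2}\prod_{i\geq 1}(1+\alpha_i^+ z)e^{-\alpha_i^+ z}\prod_{i\geq 1}(1-\alpha_i^- z)e^{\alpha_i^- z}.
\end{equation*}

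For the forward direction I would first reduce to the ergodic case via ergodic decomposition: any $\mathbf{H}$ as in the statement has law $\int P_\omega\, d\mu(\omega)$ for some probability measure $\mu$ on the parameter space, so it suffices to prove almost sure convergence to the deterministic $\Psi_\omega$ under each $P_\omega$. Under $P_\omega$, the Vershik--Kerov type asymptotics built into the identification of the ergodic boundary give that the top and bottom rescaled eigenvalues $\lambda_i(\mathbf{H}_N)/N$ converge almost surely to $\alpha_i^+$ and $-\alpha_i^-$ respectively, while the normalised trace moments satisfy $N^{-1}\mathrm{tr}(\mathbf{H}_N)\to\gamma_1$ and $N^{-2}\mathrm{tr}(\mathbf{H}_N^2)\to \gamma_2+\sum_{i\geq 1}\bigl[(\alpha_i^+)^2+(\alpha_i^-)^2\bigr]$. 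Feeding these convergences into the criterion of Proposition \ref{ConvergenceProp} yields uniform convergence on compacts of $\Psi_N(z/N)$ to $\Psi_\omega(z)$; passing back to the mixture identifies the limit as the random LP function $\Psi_\Omega$ with $\Omega\sim\mu$.

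For the converse, I start with an arbitrary random LP function $F$ satisfying $F(0)=1$ almost surely, pull back its law through the bijection $\omega\mapsto\Psi_\omega$ to obtain a probability measure $\mu_F$ on the parameter space, and then build $\mathbf{H}$ by first sampling $\Omega\sim\mu_F$ and, conditionally on $\Omega$, sampling $\mathbf{H}$ from the ergodic measure $P_\Omega$. The resulting mixture is $U(\infty)$-invariant with unitarily invariant $N\times N$ marginals, and the forward direction shows that the rescaled characteristic polynomials converge almost surely to $\Psi_\Omega$, which has the prescribed law of $F$.

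The main obstacle I anticipate is the almost sure coordinate-wise convergence of the extreme rescaled eigenvalues together with the trace moments under each ergodic $P_\omega$: this is a genuinely infinite-dimensional ergodic statement and must be imported from the Olshanski--Vershik framework (or the equivalent Pickrell-type asymptotic analysis) rather than derived from an elementary law of large numbers. A secondary concern is that Proposition \ref{ConvergenceProp} must remain applicable when infinitely many of the $\alpha_i^\pm$ are nonzero and when the bulk of the eigenvalues contributes the Gaussian factor $e^{-\gamma_2 z^2/2}$; fortunately the criterion demands only the convergence of a handful of symmetric functionals of the roots, which is precisely the data supplied by the ergodic asymptotics above.
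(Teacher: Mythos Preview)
Your proposal is correct and follows essentially the same route as the paper: import the almost-sure (O--V) asymptotics for the rescaled eigenvalues and trace moments from the Olshanski--Vershik/Borodin--Olshanski framework, feed them into Proposition~\ref{ConvergenceProp} for the forward direction, and use the bijection between $\mathcal{M}_p(\Omega)$ and $\mathcal{M}_p^{\textnormal{inv}}(\mathbb{H})$ to manufacture the required matrix for the converse. The only cosmetic difference is that you reduce explicitly to the ergodic case first, whereas the paper cites the Borodin--Olshanski result that already packages the (O--V) limits for a general invariant measure $\mathsf{M}$ together with the identification $\mathsf{Law}(\omega)=\nu_{\mathsf{M}}$.
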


We note that the papers \cite{ValkoViragZeta,ValkoLin,StochasticAiryFunction} also provide quantitative convergence rates to the limiting entire functions studied there. Furthermore, many remarkable properties of these functions have been studied: representations as principal value products \cite{ChhaibiNajnudelNikeghbali, ChhaibiHovhannisyanEtAl,ValkoViragZeta}, equivalent descriptions in terms of stochastic equations \cite{StochasticAiryFunction}, Taylor coefficients given in terms of iterated stochastic integrals \cite{ValkoViragZeta,ValkoLin}, which are related to some classical identities for Brownian motion \cite{Dufresne,RiderValko}, and explicit moment formulae \cite{ValkoViragZeta,ValkoLin}. For generic random Laguerre-P\'{o}lya functions that we consider in this paper such precise results are highly unlikely to exist and in principle the most one could hope for is a convergence statement of the sort we prove here. In fact, even in the special case of the stochastic Airy function \cite{StochasticAiryFunction} whose study is in some sense  (both technically and conceptually) the most challenging out of the entire functions arising from classical random matrix theory point processes, explicit moment formulae have not been discovered yet.

It is then fitting to conclude the introduction with a question in the direction of explicit formulae and integrability. In the papers \cite{DistinguishedFamily,CircularJacobiJoint,ForresterJoint}, motivated by different considerations, the main object of study is the distribution of the first (non-trivial) Taylor coefficient of the stochastic Hua-Pickrell and stochastic Bessel functions. These distributions have connections to Painlev\'{e} equations and for certain parameters admit explicit combinatorial formulae for their moments. It would be very interesting if analogous results exist for the stochastic Airy function of \cite{StochasticAiryFunction}. More generally, do such results exist for higher order Taylor coefficients for any of these functions?

A couple of days after this paper was first posted on $\mathsf{arXiv}$ the very interesting paper \cite{NajnudelNikeghbali} appeared as well. Although both papers are concerned with convergence to random entire functions with real zeros the points of view are very different and there is essentially no overlap in terms of results and techniques between the two. The authors of both papers were working completely independently.

The paper is organised as follows. In Section \ref{SectionConv} we discuss convergence to Laguerre-P\'{o}lya functions. Section \ref{SectionHermitian} contains our results on unitarily invariant Hermitian matrices. The extension to $\beta$-ensembles is presented in Section \ref{SectionBeta}. Proofs of the complex analysis results are contained in Section \ref{SectionProofs}.

\paragraph{Acknowledgements} I am grateful to Alexei Borodin for useful comments on a preliminary version of the paper. I am also very grateful to an anonymous referee for a careful reading of the paper and useful comments and suggestions.

\section{Convergence to Laguerre-P\'{o}lya entire functions}\label{SectionConv}
We begin by defining the following infinite-dimensional parameter space.
\begin{defn}
Define the space $\hat{\Omega}$:
\begin{align*}
\hat{\Omega}&= \bigg\{\omega=\left(\alpha^+,\alpha^-,\gamma_1,\delta\right)\in \mathbb{R}_+^\infty\times \mathbb{R}_+^\infty \times \mathbb{R}\times \mathbb{R}_+:\\
&\alpha^+=(\alpha_1^+\ge \alpha_2^+\ge \cdots \ge 0);\alpha^-=(\alpha_1^-\ge \alpha_2^-\ge \cdots \ge 0);\sum_{i=1}^\infty\left(\alpha_i^+\right)^2 +\sum_{i=1}^\infty\left(\alpha_i^-\right)^2\le \delta \bigg\}.
\end{align*}
Endow $\hat{\Omega}$ with the topology of coordinate-wise convergence. 
$\hat{\Omega}$ is in bijection with the space $\Omega$:
\begin{align*}
\Omega&=\bigg\{\omega=\left(\alpha^+,\alpha^-,\gamma_1,\gamma_2\right)\in \mathbb{R}_+^\infty\times \mathbb{R}_+^\infty \times \mathbb{R}\times \mathbb{R}_+: \\
&\alpha^+=(\alpha_1^+\ge \alpha_2^+\ge \cdots \ge 0);\alpha^-=(\alpha_1^-\ge \alpha_2^-\ge \cdots \ge 0);\sum_{i=1}^\infty\left(\alpha_i^+\right)^2 +\sum_{i=1}^\infty\left(\alpha_i^-\right)^2<\infty\bigg\}.
\end{align*}
via the correspondence $\gamma_2=\delta- \sum_{i=1}^\infty\left(\alpha_i^+\right)^2 -\sum_{i=1}^\infty\left(\alpha_i^-\right)^2$. We endow $\Omega$ with the topology making this bijection bi-continuous.
\end{defn}

In principle we could have only defined the space $\hat{\Omega}$ but as the parameter $\gamma_2$ makes formulae in the sequel look nicer we introduce $\Omega$ as well. We move on to the definition of the Laguerre-P\'{o}lya ($\mathcal{LP}$) class.

\begin{defn}\label{DefLP} We define the Laguerre-P\'{o}lya ($\mathcal{LP}$) class of entire functions, parametrised by $\omega\in \Omega$ (or equivalently $\hat{\Omega}$), consisting of functions $\mathsf{E}_\omega$ given by the Hadamard product:
\begin{align}
  \mathsf{E}_\omega(z)=e^{-\gamma_1z-\frac{\gamma_2}{2}z^2}\prod_{i=1}^\infty e^{z\alpha_i^+}\left(1-z\alpha_i^+\right)\prod_{i=1}^\infty e^{-z \alpha_i^-}\left(1+z\alpha_i^-\right). 
\end{align}
 We endow $\mathcal{LP}$ with the topology of uniform convergence on compact sets in $\mathbb{C}$.
\end{defn}

This is the class of functions $f$ arising as uniform on compact sets limits of polynomials with only real zeros, subject to the constraint $f(0)=1$, see \cite{Widder,TotalPositivity}. This fact is the original result of Laguerre \cite{Laguerre} (who considered the case of only positive zeros) and P\'{o}lya \cite{PolyaOriginal} (who considered the general case). The standard parametrisation of Laguerre-P\'{o}lya functions in the literature is slightly different (and normally the constraint $f(0)=1$ is not imposed, allowing in particular for $z=0$ to correspond to a zero). It is given in terms of the roots $\{c_i\}$ of the function instead of their reciprocals and there is no positive and negative splitting. We chose this convention (which is equivalent, subject to $f(0)=1$ of course) to be consistent with previous random matrix theory works. These functions are also very closely related to the subject of total positivity \cite{TotalPositivity}. In particular, functions in $\mathcal{LP}$ (except $e^{-\gamma_1 z}$) are exactly the ones appearing as reciprocals of Laplace transforms of P\'{o}lya frequency functions, see \cite{TotalPositivity}. Moreover, they (more precisely the subclass considered by Laguerre) give a transcendental characterisation of so-called multiplier sequences, see \cite{PolyaSchur} and \cite{BorceaBranden} for a vast generalisation.

We also have a nice combinatorial power series expansion of $\mathsf{E}_{\omega}$. Consider the modified power sums associated to $\omega\in \Omega$ (or equivalently $\hat{\Omega}$):
\begin{align*}
 \tilde{p}_1(\omega)&=\gamma_1,\ \ \ 
 \tilde{p}_2(\omega)=\gamma_2+\sum_{i=1}^\infty \left(\alpha_i^+\right)^2+\sum_{i=1}^\infty \left(\alpha_i^-\right)^2=\delta,\\
 \tilde{p}_k(\omega)&=\sum_{i=1}^\infty\left(\alpha_i^+\right)^k+\sum_{i=1}^\infty(-1)^k\left(\alpha_i^-\right)^k, \ \ k\ge 3.
\end{align*}
Then, by computing the derivatives of the entire function $\mathsf{E}_\omega$, or alternatively using the Newton identities for polynomials and making use of the uniform limit in Proposition \ref{ConvergenceProp}, we get the power series expansion:
\begin{align}
\mathsf{E}_\omega(z)=1+\sum_{j=1}^\infty z^j \sum_{\substack{m_1+2m_2+\cdots+jm_j=j\\m_1\ge 0, \dots, m_j\ge 0}} \prod_{i=1}^j\frac{\left(-\tilde{p}_i(\omega)\right)^{m_i}}{m_i!i^{m_i}}.
\end{align}
Under some natural assumptions on the parameters $\gamma_1$ and $\gamma_2$ (which are known to hold for some of the random matrix models we consider later on), the function $\mathsf{E}_\omega$ also has a principal value product representation, see Proposition \ref{PrincipalValProp}.

Moving on, for any $N\ge 1$ define the Weyl chamber:
\begin{align}\label{WeylChamber}
\mathbb{W}_N=\{\mathbf{x}=(x_1,x_2,\dots,x_N)\in \mathbb{R}^N:x_1\ge x_2 \ge \cdots \ge x_N\}.
\end{align}
We consider the following embedding of $\mathbb{W}_N$ in $\hat{\Omega}$ (equivalently $\Omega$).
\begin{defn}
For $\mathbf{x}^{(N)}\in \mathbb{W}_N$ define the quantities:
\begin{align*}
  \alpha_i^{+,(N)}\left(\mathbf{x}^{(N)}\right)&=\begin{cases} \max\left\{x_i^{(N)},0 \right\}, &i=1,\dots, N, \\
  0, &i=N+1, N+2, \dots,
  \end{cases}\\
    \alpha_i^{-,(N)}\left(\mathbf{x}^{(N)}\right)&=\begin{cases} \max\left\{-x_{N-i+1}^{(N)},0 \right\},&i=1,\dots, N, \\ 
    0,&i=N+1, N+2, \dots,
  \end{cases}\\
  \gamma^{(N)}_1\left(\mathbf{x}^{(N)}\right)&=\sum_{i=1}^\infty \alpha_i^{+,(N)}\left(\mathbf{x}^{(N)}\right)-\sum_{i=1}^\infty\alpha_i^{-,(N)}\left(\mathbf{x}^{(N)}\right)=\sum_{i=1}^N x_i^{(N)},\\
  \delta^{(N)}\left(\mathbf{x}^{(N)}\right)&=\sum_{i=1}^\infty\left(\alpha_i^{+,(N)}\left(\mathbf{x}^{(N)}\right)\right)^2+\sum_{i=1}^\infty\left(\alpha_i^{-,(N)}\left(\mathbf{x}^{(N)}\right)\right)^2=\sum_{i=1}^N \left(x_i^{(N)}\right)^2.
\end{align*}
\end{defn}

Consider the following conditions, named after Olshanski-Vershik (O-V) who introduced them in \cite{OlshanskiVershik} (usually in the literature \cite{BorodinOlshanskiErgodic,OrbitalBeta} and in the original paper \cite{OlshanskiVershik} these definitions are given in terms of $\mathbf{x}^{(N)}/N$). 

\begin{defn}
We say that a sequence $\left\{\mathbf{x}^{(N)} \right\}_{N=1}^\infty$ in $\left\{\mathbb{W}_N \right\}_{N=1}^\infty$ satisfies the Olshanski-Vershik (O-V) conditions iff the following limits exist:
\begin{align*}
 \alpha_i^{\pm}&\overset{\textnormal{def}}{=}  \lim_{N\to \infty} \alpha_i^{\pm,(N)}\left(\mathbf{x}^{(N)}\right), \ \ \forall i\ge 1,\\
  \gamma_1&\overset{\textnormal{def}}{=}  \lim_{N\to \infty} \gamma_1^{(N)}\left(\mathbf{x}^{(N)}\right), \\
   \delta&\overset{\textnormal{def}}{=}  \lim_{N\to \infty} \delta^{(N)}\left(\mathbf{x}^{(N)}\right).
\end{align*}
Observe that in this case by Fatou's lemma we have:
\begin{align}\label{Gamma2}
 \gamma_2=\delta- \sum_{i=1}^\infty\left(\alpha_i^+\right)^2 -\sum_{i=1}^\infty\left(\alpha_i^-\right)^2  \ge 0.
\end{align}
We say that the point $\omega=\left(\alpha^+,\alpha^-,\gamma_1,\gamma_2\right)\in \Omega$  is the limit point of $\left\{\mathbf{x}^{(N)} \right\}_{N=1}^\infty$.
\end{defn}

We have the following simple criterion for convergence of a sequence of polynomials to a concrete entire function in $\mathcal{LP}$. Although, as far as we can tell, the exact statement has not appeared before in the literature the criterion should still be considered in some sense classical. It can be obtained by combining some classical results of Lindwart and P\'{o}lya \cite{LindwartPolya} along with the equivalent form of the (O-V) conditions in Proposition 2.3 of \cite{OrbitalBeta} and a little extra argument. However, a direct proof is actually short and instructive (and generalises in a way that we will need later on) and we present that instead in Section \ref{SectionProofs}. Unsurprisingly, the simple proof is classical complex analysis and the ideas go (at least) back to various proofs of the original Laguerre-P\'{o}lya result \cite{Widder,TotalPositivity}. 

\begin{prop}\label{ConvergenceProp}
A sequence $\left\{\mathbf{x}^{(N)} \right\}_{N=1}^\infty$ in $\left\{\mathbb{W}_N \right\}_{N=1}^\infty$ satisfies the (O-V) conditions, with limit point $\omega$, if and only if the following convergence holds uniformly on compact sets in $\mathbb{C}$:
\begin{equation}\label{ConvergenceDisplay}
 \Psi_N(z) \overset{\textnormal{def}}{=}\prod_{i=1}^N\left(1-zx_i^{(N)}\right)  \overset{N\to\infty}{\longrightarrow} \mathsf{E}_\omega(z).
\end{equation}
\end{prop}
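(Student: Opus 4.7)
I treat the two implications separately. For the forward direction, assuming the (O-V) conditions, the argument combines two ingredients. First, the family $\{\Psi_N\}$ is locally uniformly bounded: the elementary inequality $|1-w|\le\exp(-\mathrm{Re}(w)+\tfrac{1}{2}|w|^2)$ applied factor-by-factor gives
\begin{equation*}
|\Psi_N(z)| \le \exp\!\Bigl(|z|\,|\gamma_1^{(N)}| + \tfrac{|z|^2}{2}\,\delta^{(N)}\Bigr),
\end{equation*}
whose right side is bounded on each compact set because $\gamma_1^{(N)}\to\gamma_1$ and $\delta^{(N)}\to\delta$, so $\{\Psi_N\}$ is a normal family. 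Second, I prove termwise convergence of the Taylor coefficients at the origin to those of $\mathsf{E}_\omega$, which by Newton's identities reduces to showing $p_k^{(N)}:=\sum_i (x_i^{(N)})^k\to\tilde{p}_k(\omega)$ for each $k\ge 1$. For $k=1,2$ this is part of the (O-V) hypothesis; for $k\ge 3$ I split at a large index $I_0$ and bound the tail by
\begin{equation*}
\sum_{i>I_0}\bigl(\alpha_i^{\pm,(N)}\bigr)^k \le \bigl(\alpha_{I_0+1}^{\pm,(N)}\bigr)^{k-2}\,\delta^{(N)},
\end{equation*}
which is small for large $I_0$ and $N$ since $\alpha_i^\pm\to 0$ as $i\to\infty$. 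Normality plus termwise convergence of the Taylor series at one point forces $\Psi_N\to\mathsf{E}_\omega$ uniformly on compact subsets of $\mathbb{C}$.

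For the reverse direction, assume $\Psi_N\to\mathsf{E}_\omega$ locally uniformly. Hurwitz's theorem applied to small disks around the reciprocals $\pm 1/\alpha_i^\pm$ of the zeros of $\mathsf{E}_\omega$, with an inductive peeling (and the degenerate case $\alpha_i^\pm=0$ handled by noting that $\mathsf{E}_\omega$ is nonvanishing on the relevant ray, and hence so is $\Psi_N$ eventually), yields $\alpha_i^{\pm,(N)}\to\alpha_i^\pm$ for every $i$. Uniform convergence on compacts delivers convergence of the Taylor coefficients at $0$ via Cauchy's integral formula, and the identities $\gamma_1^{(N)}=-\Psi_N'(0)$ together with $\delta^{(N)}=(\gamma_1^{(N)})^2-\Psi_N''(0)$ then give $\gamma_1^{(N)}\to\gamma_1$ and $\delta^{(N)}\to\delta$.

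The principal subtlety, which I expect to be the main obstacle, is the appearance of the Gaussian factor $e^{-\gamma_2 z^2/2}$ in $\mathsf{E}_\omega$, which has no counterpart in any single $\Psi_N$. It emerges because $\gamma_2$ is absorbed into $\tilde{p}_2(\omega)=\delta$, so controlling the second power sum $\delta^{(N)}$ automatically captures it: any ``squared mass'' not tracked by the discrete $\alpha_i^\pm$ reappears as a genuine $\gamma_2\ge 0$. The secondary difficulty is that the number of summands defining $p_k^{(N)}$ grows with $N$, so pointwise convergence of each $\alpha_i^{\pm,(N)}$ is by itself insufficient; the tail bound displayed above, which leverages the uniform-in-$N$ control of the second moment $\delta^{(N)}$, is precisely what closes this gap.
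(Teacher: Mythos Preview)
Your proposal is correct. The reverse direction matches the paper's argument almost exactly: Hurwitz for the roots, and convergence of the first two derivatives at $0$ for $\gamma_1$ and $\delta$.

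For the forward direction you take a genuinely different route. The paper also begins with a Montel normality argument (via the slightly cruder inequality $|(1-z)e^{z}|\le e^{4|z|^{2}}$), but then identifies an arbitrary subsequential limit $E$ \emph{structurally}: Hurwitz's theorem shows that the zeros of $E$ are exactly the $\pm 1/\alpha_i^{\pm}$; Hadamard's factorisation (the bound forces order $\le 2$) writes $E(z)=e^{c_1 z+c_2 z^{2}}\prod e^{z\alpha_i^{+}}(1-z\alpha_i^{+})\prod e^{-z\alpha_i^{-}}(1+z\alpha_i^{-})$; and matching the first two derivatives at $0$ against the (O-V) limits pins down $c_1=-\gamma_1$, $c_2=-\gamma_2/2$. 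You instead bypass Hurwitz and Hadamard entirely by proving directly that \emph{all} power sums $p_k^{(N)}$ converge to $\tilde p_k(\omega)$ --- your tail estimate $\sum_{i>I_0}(\alpha_i^{\pm,(N)})^{k}\le (\alpha_{I_0+1}^{\pm,(N)})^{k-2}\,\delta^{(N)}$ is precisely the mechanism the paper invokes later in Lemma~\ref{LemmaTop} --- and then Newton's identities deliver convergence of every Taylor coefficient at $0$, which together with normality forces the limit. Your argument is arguably more elementary (no Hadamard) and yields the power-series expansion of $\mathsf{E}_\omega$ as a byproduct, whereas the paper deduces that expansion \emph{from} the proposition; the paper's approach is more structural and extends more transparently to the setting of Proposition~\ref{PropositionExtension}, where the input is convergence on a real neighbourhood rather than explicit control of power sums.
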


In plain words, the result says that if we have convergence of the points (reciprocals of the roots) plus a little more (which is important), namely convergence of the sum of points and sum of squares of points\footnote{If these do not converge we can renormalise the polynomial as follows:
\begin{equation*}
  \prod_{i=1}^N\left(1-zx_i^{(N)}\right)e^{-\mathsf{C}_1^{(N)}z-\frac{1}{2}\mathsf{C}_2^{(N)}z^2},
\end{equation*}
for some real $\mathsf{C}_1^{(N)}, \mathsf{C}_2^{(N)}$ so that $\left(\alpha^{+,(N)}\left(\mathbf{x}^{(N)}\right),\alpha^{-,(N)}\left(\mathbf{x}^{(N)}\right),\gamma_1^{(N)}\left(\mathbf{x}^{(N)}\right)+\mathsf{C}_1^{(N)},\delta^{(N)}\left(\mathbf{x}^{(N)}\right)+\mathsf{C}_2^{(N)}\right)\in \hat{\Omega}$ for all $N$. If $\gamma_1^{(N)}\left(\mathbf{x}^{(N)}\right)+\mathsf{C}_1^{(N)}$ and $\delta^{(N)}\left(\mathbf{x}^{(N)}\right)+\mathsf{C}_2^{(N)}$ converge (of course we already assume that the $\alpha^{\pm,(N)}$ parameters converge), then we get uniform convergence on compact sets in $\mathbb{C}$ for the renormalised polynomial from the slight extension of Proposition \ref{ConvergenceProp} stated in Remark \ref{RemarkExtension}.} then the polynomials converge uniformly on compact sets in $\mathbb{C}$ to the corresponding entire function $\mathsf{E}_\omega$ in $\mathcal{LP}$. The converse is intuitive and easier to see. There is also a quantitative version, see Proposition \ref{PropositionQuantitative}. 

Our interest is in probabilistic applications of this result: suppose a random sequence $\left\{\mathbf{x}^{(N)} \right\}_{N=1}^\infty$ in $\left\{\mathbb{W}_N \right\}_{N=1}^\infty$ has law $\mathfrak{M}$ and $\mathfrak{M}$-a.s. satisfies the (O-V) conditions with limit point $\omega$ having law $\nu$. Then, $\mathfrak{M}$-a.s. the convergence (\ref{ConvergenceDisplay}) holds uniformly on compact sets in $\mathbb{C}$ where $\omega$ has law $\nu$. Intuitively a random function in $\mathcal{LP}$ is equivalent to a probability measure $\nu$ on $\Omega$, but this needs a proof (which builds upon an extension of the criterion above), see Proposition \ref{MeasureProp}. From now on, when we speak of a random $\mathcal{LP}$ function we then mean a probability measure on $\Omega$.

Our goal in this paper is not to prove the (O-V) conditions for new models but rather to point out that combining a rather simple complex analysis argument with known (non-trivial) probabilistic results has immediate and rather remarkable consequences for convergence to the corresponding random entire functions. We also observe that the question of universality of such random entire functions boils down to proving universality for the points (equivalently reciprocals of their roots), which is known, along with universality for the sum and sum of squares of points\footnote{And if needed we could renormalise the polynomial.}, which as far as we can tell is only known \cite{UniversalityStochasticBessel} at the hard edge of random matrices (under a subsequence)  but should be true more generally and would be interesting to investigate further. We discuss this briefly in Remark \ref{Universality}.

Finally, such convergence conditions (or variants thereof) exist for many probabilistic models from integrable probability and asymptotic representation theory, see for example \cite{VershikKerovSymmetric,VershikKerovUnitary, OlshanskiHarmonic, BorodinOlshanskiHarmonic,KerovOkounkovOlshanski}. 
 Proposition \ref{ConvergenceProp} then applies and has analogous consequences to the ones presented in the next sections. Since in this paper our focus is on random matrices we do not discuss these discrete models.
 
We conclude this section with a couple of comments on Proposition \ref{ConvergenceProp}.

\begin{rmk}\label{RemarkExtension}
The exact same proof of Proposition \ref{ConvergenceProp}, with only notational modifications, gives a more general result. Proposition \ref{ConvergenceProp} being a special case, can be seen if we observe:
\begin{align*}
 \prod_{i=1}^N\left(1-zx_i^{(N)}\right)&= e^{-\gamma_1^{(N)}\left(\mathbf{x}^{(N)}\right)z-\frac{1}{2}\left[\delta^{(N)}\left(\mathbf{x}^{(N)}\right)-\sum_{i=1}^\infty\left(\alpha_i^{+,(N)}\left(\mathbf{x}^{(N)}\right)\right)^2-\sum_{i=1}^\infty\left(\alpha_i^{-,(N)}\left(\mathbf{x}^{(N)}\right)\right)^2\right]z^2}\times \\ 
 &\times \prod_{i=1}^\infty e^{z\alpha_i^{+,(N)}\left(\mathbf{x}^{(N)}\right)}\left(1-z\alpha_i^{+,(N)}\left(\mathbf{x}^{(N)}\right)\right)\prod_{i=1}^\infty e^{-z \alpha_i^{-,(N)}\left(\mathbf{x}^{(N)}\right)}\left(1+z\alpha_i^{-,(N)}\left(\mathbf{x}^{(N)}\right)\right)  .
\end{align*}
Namely, suppose $\hat{\Omega} \ni \omega_N \longrightarrow \omega \in \hat{\Omega}$ (in the topology of $\hat{\Omega}$ of coordinate-wise convergence). Then, the following convergence holds uniformly on compact sets in $\mathbb{C}$: 
\begin{equation}
\mathsf{E}_{\omega_N}(z)\overset{N\to\infty}{\longrightarrow} \mathsf{E}_\omega(z).    
\end{equation}
In words, we have continuity in $\mathcal{LP}$ in the parameter space $\hat{\Omega}$. This again seems classical but we have not found the statement in the literature. A quantitative version also exists, see Proposition \ref{PropositionQuantitative}. This result will be relevant when we discuss the soft edge scaling of characteristic polynomials and entire functions with $\beta$-Airy point process zeros. Moreover, such a result, combined with the probabilistic work of \cite{HardToSoft}, could have consequences for the so-called hard-to-soft transition for these entire functions. 
\end{rmk}

\begin{rmk}\label{RemarkPositive}
If $\alpha_i^{-,(N)}\left(\mathbf{x}^{(N)}\right)\equiv 0$ we fall in the subclass $\mathcal{LP}_+$ of $\mathcal{LP}$ consisting of entire functions which are uniform limits of polynomials with only positive zeros (and subject to $f(0)=1$), see \cite{TotalPositivity}, in which case, see \cite{TotalPositivity}, we have $\sum_{i=1}^\infty \alpha_i^+ \le \gamma_1 <\infty$ and moreover $\gamma_2=0$. Hence, the convergence statement (\ref{ConvergenceDisplay}) takes the nicer form:
\begin{equation}\label{ConvergencePositive}
  \Psi_N(z) \overset{N\to\infty}{\longrightarrow}    e^{\left(-\gamma_1+\sum_{i=1}^\infty \alpha_i^{+}\right)z}\prod_{i=1}^\infty \left(1-z\alpha_i^+\right).  
\end{equation}
\end{rmk}

\paragraph{Notation} For a topological space $\mathfrak{X}$ we write $\mathcal{M}_p\left(\mathfrak{X}\right)$ for the space of Borel probability measures on $\mathfrak{X}$ and write $\mathsf{Law}\left(\mathsf{X}\right)$ for the law of a random element $\mathsf{X}$.

\section{Applications to unitarily invariant Hermitian matrices}\label{SectionHermitian}

\subsection{A general convergence theorem for random Hermitian matrices}

Consider the infinite-dimensional unitary group $\mathbb{U}(\infty)$, namely the inductive limit of the chain of $N\times N$ unitary groups $\mathbb{U}(N)$ under the natural inclusions. Denote by $\mathbb{H}(N)$ and by $\mathbb{H}$ the spaces of $N\times N$ and infinite Hermitian matrices respectively. We note that $\mathbb{H}$ can also be realised as the projective limit $\underset{\leftarrow}{\lim}\mathbb{H}(N)$ under the maps $\pi_N^{N+1}:\mathbb{H}(N+1)\to \mathbb{H}(N)$ given by:
\begin{equation*}
    \pi_N^{N+1}\left[\left(\mathbf{H}_{ij}\right)_{i,j=1}^{N+1}\right]=\left(\mathbf{H}_{ij}\right)_{i,j=1}^N.
\end{equation*}
Moreover, define the maps $\pi_N^\infty:\mathbb{H}\to \mathbb{H}(N)$ by $\pi_N^{\infty}\left[\left(\mathbf{H}_{ij}\right)_{i,j=1}^{\infty}\right]=\left(\mathbf{H}_{ij}\right)_{i,j=1}^N$.

We have the natural action $\mathbb{U}(\infty)\curvearrowright\mathbb{H}$ by conjugation. Denote by $\mathcal{M}_p^{\textnormal{inv}}\left(\mathbb{H}\right)$ the space of invariant (under this action) probability measures on $\mathbb{H}$ and by $\mathcal{M}_p^{\textnormal{erg}}\left(\mathbb{H}\right)$ the space of ergodic measures. It is a classical result of Pickrell \cite{Pickrell} and Olshanski and Vershik \cite{OlshanskiVershik} that $\mathcal{M}_p^{\textnormal{erg}}\left(\mathbb{H}\right)$ is in bijection with the space $\Omega$, see \cite{Pickrell,OlshanskiVershik} for a precise statement. We write $\mathfrak{N}_\omega$ for the ergodic measure corresponding to $\omega \in \Omega$. In fact, $\mathfrak{N}_\omega$ has a nice explicit construction \cite{OlshanskiVershik}. Consider, for $\omega \in \Omega$, the random matrix $\mathbf{H}^{(\omega)}\in \mathbb{H}$: 
\begin{equation}\label{ExtremalHermitian}
\mathbf{H}^{(\omega)}_{ij}=\gamma_1 \mathbf{1}_{i=j}+\sqrt{\gamma_2}\mathbf{G}_{ij}+\sum_{k=1}^\infty\alpha_k^+\left(\xi_i^{+,(k)}\overline{\xi_j^{+,(k)}}-\mathbf{1}_{i=j}\right)+\sum_{k=1}^\infty\left(-\alpha_k^-\right)\left(\xi_i^{-,(k)}\overline{\xi_j^{-,(k)}}-\mathbf{1}_{i=j}\right),
\end{equation}
where $\mathbf{G}$ is an infinite Gaussian Unitary Ensemble matrix (with normalisation $\mathbb{E}\left[\mathbf{G}^2_{ii}\right]=1$) and $\{\xi_{i}^{+,(j)}\}_{i,j=1}^\infty, \{\xi_{i}^{-,(j)}\}_{i,j=1}^\infty $ are two independent families of independent standard complex Gaussian random variables. Then, $\mathfrak{N}_\omega$ is simply the law of $\mathbf{H}^{(\omega)}$. Moreover, it was later shown by Borodin and Olshanski that any $\mathsf{M}\in \mathcal{M}_p^{\textnormal{inv}}\left(\mathbb{H}\right)$ can be decomposed into ergodic measures as follows:
\begin{equation}\label{DecompositionMatrices}
 \mathsf{M}=\int_{\Omega} \mathfrak{N}_\omega\nu_{\mathsf{M}}(d\omega),
 \end{equation}
for a unique $\nu_{\mathsf{M}}\in \mathcal{M}_p(\Omega)$, see \cite{BorodinOlshanskiErgodic} for the precise statement\footnote{Observe that, in principle this gives a concrete matrix model for any $\mathsf{M}\in \mathcal{M}_p^{\textnormal{inv}}\left(\mathbb{H}\right)$. Simply take $\omega$ in (\ref{ExtremalHermitian}) random, distributed according to $\nu_{\mathsf{M}}$, and independent of $\mathsf{G},\{\xi_{i}^{+,(j)}\}_{i,j=1}^\infty, \{\xi_{i}^{-,(j)}\}_{i,j=1}^\infty $.}. Finally, the map (\ref{DecompositionMatrices}) is in fact a bijection between $\mathcal{M}_p^{\textnormal{inv}}\left(\mathbb{H}\right)$ and $\mathcal{M}_p\left(\Omega\right)$, see \cite{BorodinOlshanskiErgodic}.

We need a final piece of notation. For any $N\ge 1$, define the function $\mathsf{eval}: \mathbb{H}(N)\to \mathbb{W}_N$, where $\mathbb{W}_N$ is the Weyl chamber (\ref{WeylChamber}), which takes a matrix $\mathbf{H}$ to its ordered eigenvalues in a non-decreasing fashion counted with multiplicity. We have now arrived at the main result of this section.

\begin{thm}\label{HermitianTheorem}
Let $\mathsf{M}\in \mathcal{M}_p^{\textnormal{inv}}(\mathbb{H})$ be the law of $\mathbf{H}\in \mathbb{H}$. Then, $\mathsf{M}$-a.s. we have uniformly on compact sets in $\mathbb{C}$:
\begin{equation}
  \Psi_N\left(\frac{z}{N}\right)=\prod_{i=1}^N\left(1-z\frac{\mathsf{eval}\left(\pi_N^\infty\left(\mathbf{H}\right)\right)}{N}\right)=\det\left(\mathbf{I}-\frac{z}{N}\pi_N^\infty\left(\mathbf{H}\right)\right)  \overset{N\to\infty}{\longrightarrow} \mathsf{E}_\omega(z) , \textnormal{ where } \mathsf{Law}(\omega)=\nu_{\mathsf{M}}.
\end{equation}
Moreover, any random entire function in $\mathcal{LP}$ can be obtained in this way.
\end{thm}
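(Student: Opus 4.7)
The plan is to combine Proposition \ref{ConvergenceProp} with the ergodic decomposition (\ref{DecompositionMatrices}) and the Olshanski--Vershik identification of the ergodic measures on $\mathbb{H}$. Writing $\mathsf{M}=\int_\Omega \mathfrak{N}_\omega\,\nu_{\mathsf{M}}(d\omega)$, it is enough to prove that for each fixed $\omega\in\Omega$, under $\mathfrak{N}_\omega$ the rescaled eigenvalue sequence $\mathsf{eval}(\pi_N^\infty(\mathbf{H}))/N$ almost surely satisfies the (O-V) conditions with limit point exactly $\omega$. Given this, Fubini yields the $\mathsf{M}$-almost sure statement (with random limit point $\omega\sim\nu_{\mathsf{M}}$), and Proposition \ref{ConvergenceProp} converts it into the claimed uniform-on-compacts convergence of $\Psi_N(z/N)$ to $\mathsf{E}_\omega$.

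For the ergodic step I would use the explicit realisation $\mathbf{H}=\mathbf{H}^{(\omega)}$ from (\ref{ExtremalHermitian}) and check, $\mathfrak{N}_\omega$-almost surely, the four pieces of the (O-V) definition: $\gamma_1^{(N)}\to \gamma_1$, $\delta^{(N)}\to \delta$, and $\alpha_i^{\pm,(N)}\to \alpha_i^{\pm}$ for every fixed $i\ge 1$. The first two are $N^{-1}\textnormal{tr}\,\pi_N^\infty(\mathbf{H})$ and $N^{-2}\textnormal{tr}\,\pi_N^\infty(\mathbf{H})^2$, and reduce to strong laws for sums of independent entries of the GUE and spike parts of $\mathbf{H}^{(\omega)}$. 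For the extreme scaled eigenvalues, each spike term in (\ref{ExtremalHermitian}) restricted to the first $N$ coordinates is a rank-one perturbation of a multiple of the identity whose distinguished eigenvalue equals $\alpha_k^\pm\bigl(\sum_{i=1}^N|\xi_i^{\pm,(k)}|^2-1\bigr)/N$ after the $1/N$ rescaling, which tends to $\alpha_k^\pm$ by the strong law, while the background $\sqrt{\gamma_2}\,\mathbf{G}$ has operator norm $O(\sqrt{N})$ and so vanishes after the $1/N$ rescaling; Weyl-type interlacing bounds stitch these facts together to identify the ordered top and bottom scaled eigenvalues with the ordered $\alpha_i^\pm$. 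This is precisely the content of the Olshanski--Vershik ergodic classification \cite{OlshanskiVershik,Pickrell}, which I would simply quote.

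For the converse, given a random $\mathcal{LP}$ function, which by the discussion preceding Proposition \ref{MeasureProp} is the same data as a measure $\nu\in\mathcal{M}_p(\Omega)$, I set $\mathsf{M}_\nu=\int_\Omega \mathfrak{N}_\omega\,\nu(d\omega)$. The Borodin--Olshanski bijection between $\mathcal{M}_p^{\textnormal{inv}}(\mathbb{H})$ and $\mathcal{M}_p(\Omega)$ gives $\mathsf{M}_\nu\in\mathcal{M}_p^{\textnormal{inv}}(\mathbb{H})$ with $\nu_{\mathsf{M}_\nu}=\nu$, and applying the forward direction to $\mathsf{M}_\nu$ produces a random entire function distributed as $\mathsf{E}_\omega$ with $\omega\sim\nu$, as required.

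The main obstacle is the extreme-eigenvalue half of the ergodic step: the trace identities are essentially immediate LLN applications, but confirming that the top and bottom scaled eigenvalues individually pick out the correct $\alpha_k^{\pm}$, rather than being permuted, merged, or smeared by the Gaussian background, is where all the real analytic work lives. Since this is exactly the nontrivial probabilistic content already supplied by \cite{OlshanskiVershik,Pickrell}, our contribution is conceptual: Proposition \ref{ConvergenceProp} propagates automatically from convergence at the point-process level (plus the sum and sum-of-squares of eigenvalues) to convergence at the level of characteristic polynomials viewed as entire functions.
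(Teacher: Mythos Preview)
Your proposal is correct and follows essentially the same approach as the paper: invoke the Olshanski--Vershik/Borodin--Olshanski result that the rescaled eigenvalues of the principal submatrices satisfy the (O-V) conditions $\mathsf{M}$-a.s.\ with limit point distributed as $\nu_{\mathsf{M}}$, then apply Proposition~\ref{ConvergenceProp}, and for the converse build $\mathsf{M}$ from $\nu$ via the ergodic decomposition (\ref{DecompositionMatrices}). The only cosmetic difference is that you explicitly reduce to the ergodic case via Fubini and sketch how the (O-V) verification goes for $\mathfrak{N}_\omega$, whereas the paper simply cites Section~5 of \cite{BorodinOlshanskiErgodic} for the general $\mathsf{M}$ statement directly.
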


\begin{proof}
From the results of Section 5 of \cite{BorodinOlshanskiErgodic} (building on the results of \cite{OlshanskiVershik}) we have that $\left\{\frac{\mathsf{eval}\left(\pi_N^\infty\left(\mathbf{H}\right)\right)}{N}\right\}_{N=1}^\infty$ satisfies the (O-V) conditions $\mathsf{M}$-a.s. and moreover if $\omega$ denotes the random limit point then $\mathsf{Law}(\omega)=\nu_{\mathsf{M}}$. Thus we apply Proposition \ref{ConvergenceProp} to get the first statement. For the second statement suppose we are given a random $\mathsf{E}_\omega \in \mathcal{LP}$ with $\mathsf{Law}(\omega)=\nu\in \mathcal{M}_p\left(\Omega\right)$ arbitrary. We then consider $\mathsf{M}\overset{\textnormal{def}}{=}\int_\Omega \mathfrak{N}_\omega \nu(d\omega)\in \mathcal{M}_p^{\textnormal{inv}}(\mathbb{H})$ and the conclusion follows from the first statement just proven.
\end{proof}

\begin{rmk}
As noted earlier, we use in the proof above that a random entire function in $\mathcal{LP}$ is equivalent to a probability measure on $\Omega$, which itself is proved in Proposition \ref{MeasureProp}.
\end{rmk}

Since $\nu_{\mathfrak{N}_\omega}$ is simply the delta measure at $\omega$ we immediately get the following corollary:

\begin{cor}
Let $\mathsf{E}_\omega$ be an arbitrary deterministic function in $\mathcal{LP}$. Then, $\mathfrak{N}_\omega$-a.s. we have uniformly on compact sets in $\mathbb{C}$:
\begin{equation*}
  \det\left(\mathbf{I}-\frac{z}{N}\pi_N^\infty\left(\mathbf{H}^{(\omega)}\right)\right)   \overset{N\to\infty}{\longrightarrow} \mathsf{E}_\omega(z).
\end{equation*}
\end{cor}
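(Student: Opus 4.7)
The plan is to deduce this directly from Theorem \ref{HermitianTheorem} by specialising to the invariant measure $\mathsf{M} = \mathfrak{N}_\omega$. The one thing that needs to be checked is that the decomposing measure $\nu_{\mathsf{M}}$ for $\mathsf{M} = \mathfrak{N}_\omega$ is the Dirac mass $\delta_\omega$ on $\Omega$. But this is immediate from the classification of Pickrell and Olshanski--Vershik: $\mathfrak{N}_\omega \in \mathcal{M}_p^{\textnormal{erg}}(\mathbb{H})$ by construction, so its ergodic decomposition (\ref{DecompositionMatrices}) is trivial, and by the uniqueness part of the Borodin--Olshanski bijection between $\mathcal{M}_p^{\textnormal{inv}}(\mathbb{H})$ and $\mathcal{M}_p(\Omega)$ we must have $\nu_{\mathfrak{N}_\omega} = \delta_\omega$.

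With this identification in hand, I would just invoke Theorem \ref{HermitianTheorem} applied to the random matrix $\mathbf{H}^{(\omega)}$ with law $\mathfrak{N}_\omega$. That theorem produces a random $\omega' \in \Omega$ of law $\nu_{\mathfrak{N}_\omega} = \delta_\omega$ such that, $\mathfrak{N}_\omega$-almost surely, $\det(\mathbf{I}-\tfrac{z}{N}\pi_N^\infty(\mathbf{H}^{(\omega)})) \to \mathsf{E}_{\omega'}(z)$ uniformly on compact sets in $\mathbb{C}$. Since the law $\delta_\omega$ is supported on the single point $\omega$, we have $\omega' = \omega$ almost surely, so the limit is the deterministic $\mathsf{E}_\omega(z)$, which is exactly the claim.

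There is essentially no obstacle here; the corollary is a trivial specialisation of Theorem \ref{HermitianTheorem} to an ergodic input. The only subtlety worth mentioning is the use of the uniqueness in the Borodin--Olshanski bijection to identify $\nu_{\mathfrak{N}_\omega}$ with $\delta_\omega$, but this is already recorded in \cite{BorodinOlshanskiErgodic} and used implicitly throughout the preceding discussion. Accordingly, I would write the proof as a single sentence: ``Apply Theorem \ref{HermitianTheorem} with $\mathsf{M}=\mathfrak{N}_\omega$, noting that $\nu_{\mathfrak{N}_\omega}=\delta_\omega$ since $\mathfrak{N}_\omega$ is ergodic.''
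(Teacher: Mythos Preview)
Your proposal is correct and matches the paper's own treatment exactly: the paper simply remarks that $\nu_{\mathfrak{N}_\omega}$ is the delta measure at $\omega$ and states the corollary as an immediate consequence of Theorem~\ref{HermitianTheorem}. Your one-sentence summary is precisely how the paper presents it.
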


\subsection{The stochastic Bessel and Hua-Pickrell functions}\label{DistinguishedCasesBeta=2}

We consider two distinguished cases of random entire functions. For any $N\ge 1$ and $s \in \mathbb{C}$ with $\Re(s)>-\frac{1}{2}$ we define the probability measure $\mathfrak{M}_N^{\textnormal{HP},s}$ on $\mathbb{H}(N)$:
\begin{equation*}
\mathfrak{M}_N^{\textnormal{HP},s}(d\mathbf{H})\propto  \det\left(\mathbf{I}+i\mathbf{H}\right)^{-s-N} \det\left(\mathbf{I}-i\mathbf{H}\right)^{-\bar{s}-N} d\mathbf{H},
\end{equation*}
where $d\mathbf{H}$ denotes Lebesgue measure on $\mathbb{H}(N)$ and $\propto$ denotes proportionality. These measures are called the Hua-Pickrell or Cauchy measures  \cite{BorodinOlshanskiErgodic,Neretin,ForresterBook}. The implicit normalisation constants for all probability measures we consider in this paper are explicit and can be found for example in \cite{ForresterBook}. Moreover, for any $N\ge 1$ and for $\eta>-1$ we define the probability measure $\mathfrak{M}_N^{\textnormal{L},\eta}$ on $\mathbb{H}_+(N)\subset \mathbb{H}(N)$, the space of $N\times N$ non-negative definite Hermitian matrices:
\begin{equation*}
\mathfrak{M}_N^{\textnormal{L},\eta}(d\mathbf{H}) \propto \det\left(\mathbf{H}\right)^{\eta} \exp \left(-\textnormal{Tr}\mathbf{H}\right)\mathbf{1}_{\mathbf{H}\in \mathbb{H}_+(N)} d\mathbf{H}.
\end{equation*}
These are called the Laguerre or Wishart measures \cite{ForresterBook}. Under the transformation $\mathbf{H}\mapsto 2 \mathbf{H}^{-1}$ we obtain the so-called inverse Laguerre measures:
\begin{equation*}
\mathfrak{M}_N^{\textnormal{IL},\eta}(d\mathbf{H}) \propto \det\left(\mathbf{H}\right)^{-\eta-2N} \exp \left(-2\textnormal{Tr}\mathbf{H}^{-1}\right)\mathbf{1}_{\mathbf{H}\in \mathbb{H}_+(N)} d\mathbf{H}.
\end{equation*}
Observe that all these measures are unitarily invariant. Moreover, they have the remarkable property that they are consistent, see \cite{BorodinOlshanskiErgodic,Neretin,NeretinRayleigh,ErgodicDecompositionInverseWishart} for different methods of proof:
\begin{equation*}
  \left(\pi_N^{N+1}\right)_*\mathfrak{M}_{N+1}^{\textnormal{HP},s} =\mathfrak{M}_N^{\textnormal{HP},s}  , \ \  \left(\pi_N^{N+1}\right)_*\mathfrak{M}_{N+1}^{\textnormal{IL},\eta}=\mathfrak{M}_N^{\textnormal{IL},\eta},\ \ \forall N\ge 1.
\end{equation*}
Hence, from Kolmogorov's theorem, see for example Chapter 6 in \cite{Kallenberg}, we obtain unique $\mathfrak{M}^{\textnormal{HP},s}, \mathfrak{M}^{\textnormal{IL},\eta}\in \mathcal{M}_p^{\textnormal{inv}}\left(\mathbb{H}\right)$ having the correct projections on the $N\times N$ principal submatrices:
\begin{equation*}
\left(\pi_N^{\infty}\right)_*\mathfrak{M}^{\textnormal{HP},s} =\mathfrak{M}_N^{\textnormal{HP},s}  , \ \  \left(\pi_N^{\infty}\right)_*\mathfrak{M}^{\textnormal{IL},\eta}=\mathfrak{M}_N^{\textnormal{IL},\eta},\ \ \forall N\ge 1.\end{equation*}

Write $\mathsf{SHP}_s(z)$ and $\mathsf{SB}_\eta(z)$ for (a realisation) of the random entire functions $\mathsf{E}_\omega$ that we obtain from Theorem \ref{HermitianTheorem} for $\mathfrak{M}^{\textnormal{HP},s}$ and $\mathfrak{M}^{\textnormal{IL},\eta}$ respectively. Our goal is to understand their law, namely $\mathsf{Law}\left(\omega\right)$. Write\footnote{In the notation of the previous subsection these are given by $\nu_{\mathfrak{M}^{\textnormal{HP},s}}$ and $\nu_{\mathfrak{M}^{\textnormal{IL},\eta}}$.} $\nu_{\textnormal{HP}}^{s}$ and $\nu_{\textnormal{IL}}^{\eta}$ for these respectively. 

We first consider $\mathsf{SB}_\eta(z)$. By non-negativity of eigenvalues we obtain, see Remark \ref{RemarkPositive}, $\alpha_i^-\equiv 0$ for all $i\ge 1$ and $\gamma_2\equiv 0$, $\nu_{\textnormal{IL}}^{\eta}$-a.s. Moreover, it was shown in \cite{ErgodicDecompositionInverseWishart} that $\gamma_1=\sum_{i=1}^\infty \alpha_i^+$,  $\nu_{\textnormal{IL}}^{\eta}$-a.s. Finally, under $\nu_{\textnormal{IL}}^{\eta}$ the law of the $\alpha^+$ parameters coincides with the law of the reciprocals of points $\mathfrak{b}_\eta(1)<\mathfrak{b}_\eta(2)<\mathfrak{b}_\eta(3)<\cdots$ of the determinantal point process with the Bessel kernel $\mathsf{K}_{\textnormal{Bes}}^\eta$, see \cite{ErgodicDecompositionInverseWishart} for the explicit expression. In particular, the law of  $\mathsf{SB}_{\eta}(z)$ takes the especially nice form:
\begin{equation*}
 \mathsf{SB}_{\eta}(z) \overset{\textnormal{d}}{=} \prod_{i=1}^{\infty}\left(1-\frac{z}{\mathfrak{b}_\eta(i)}\right).
\end{equation*}
This is the function (up to matching of the parameters) obtained in \cite{ValkoLin} as the scaling limit of the characteristic polynomial of a different ensemble, the real orthogonal $\beta$-ensemble, for $\beta=2$. This function has connection to integrable systems, in particular the characteristic function of $\gamma_1$ was studied in \cite{DistinguishedFamily} and (a transformation of it) was shown to solve a special case of  the $\sigma$-Painlev\'{e} III' equation. Finally, a combinatorial expression for the (finite) even moments of $\gamma_1$ was obtained in \cite{CircularJacobiJoint}.

We turn our attention to $\mathsf{SHP}_s(z)$. It was shown in \cite{Qiu} that $\gamma_2=0$, $\nu_{\textnormal{HP}}^{s}$-a.s. for any $s$. Moreover, it was proven in the same paper that $\gamma_1$ is given as a principal value sum:
\begin{equation}\label{PrincipalValue}
    \gamma_1=\lim_{R\to \infty}\left(\sum_{i=1}^\infty \alpha_i^+\mathbf{1}_{\alpha_i^+>R^{-2}}-\sum_{i=1}^\infty\alpha_i^-\mathbf{1}_{\alpha_i^->R^{-2}}\right),
\end{equation} $\nu_{\textnormal{HP}}^{s}$-a.s. for real $s$ (but the same result is expected to hold for complex $s$). Finally, under $\nu_{\textnormal{HP}}^{s}$ the law of the $\alpha^+$ and $-\alpha^-$ parameters (viewed as a point process) coincides with the law of the points of the determinantal point process with a kernel $\mathsf{K}_{\textnormal{HP}}^s$  given in terms of hypergeometric functions, see \cite{BorodinOlshanskiErgodic}. Moreover, in the special case $s=0$ we have that $\{-(\pi\alpha_i^+)^{-1}\}\sqcup\{(\pi\alpha_i^-)^{-1}\}$ is distributed as the determinantal point process with the sine kernel, see again \cite{BorodinOlshanskiErgodic}. Then, the function $\mathsf{SHP}_0\left(-\frac{z}{\pi}\right)$ matches (up to a $e^{i \pi z}$ factor) in distribution the function constructed by Chhaibi-Najnudel-Nikeghbali in \cite{ChhaibiNajnudelNikeghbali}. This is presented in \cite{ChhaibiNajnudelNikeghbali} as a principal value product but its equivalence to an $\mathsf{E}_\omega$-type formula follows from Proposition \ref{PrincipalValProp} in the sequel. Distributional properties of this function were studied in \cite{ChhaibiNajnudelNikeghbali,ValkoViragOperatorLimit}. Analogous comparisons can be made for $s\neq 0$ and $\beta=2$ with the functions constructed in \cite{ValkoLin}. Finally, again these functions have connections to integrable systems: the characteristic function (more precisely a transformation thereof) of $\gamma_1$ for $s\in \mathbb{R}$ was shown in \cite{DistinguishedFamily} to solve a special case of  $\sigma$-Painlev\'{e} III' equation. Also in the same paper explicit expressions for this distribution were obtained for $s\in \mathbb{N}$.

\section{Applications to consistent $\beta$-ensembles}\label{SectionBeta}

\subsection{A general convergence theorem for consistent $\beta$-ensembles}

The results of the previous section have a natural extension to $\beta$-ensembles. The correct general $\beta$ analogue of random unitarily invariant infinite Hermitian matrices is that of consistent random infinite interlacing arrays that we define next. For the special values $\beta=1,2,4$ this model is equivalent (by looking at the eigenvalues of consecutive principal submatrices) to infinite self-adjoint random matrices with real entries for $\beta=1$, or complex entries for $\beta=2$ or quaternion entries for $\beta=4$ and whose law is invariant under orthogonal for $\beta=1$, or unitary for $\beta=2$ or symplectic for $\beta=4$ conjugation respectively, see Proposition 1.7 in \cite{OrbitalBeta} for more details.

We begin with some definitions. We say that $\mathbf{x}\in \mathbb{W}_N$ and $\mathbf{y}\in \mathbb{W}_{N+1}$ interlace and write $\mathbf{x}\prec \mathbf{y}$ if the following inequalities hold:
\begin{equation*}
y_1\ge x_1 \ge y_2 \ge x_2 \ge \cdots \ge y_N\ge x_N \ge y_{N+1}.
\end{equation*}
We call a sequence $\left\{\mathbf{x}^{(N)} \right\}_{N=1}^\infty$in $\left\{\mathbb{W}_N\right\}_{N=1}^\infty$ so that $\mathbf{x}^{(1)}\prec \mathbf{x}^{(2)}\prec \mathbf{x}^{(3)} \prec \cdots$ an infinite interlacing array and write $\mathfrak{IA}$ for the space of all of such arrays. For $N\ge 1$, we consider the Markov kernel $\mathsf{\Lambda}_{N+1,N}^{(\beta)}$ from $\mathbb{W}_{N+1}$ to $\mathbb{W}_{N}$ defined as follows:

\begin{defn} Let $\beta>0$. For $\mathbf{y}\in \mathbb{W}_{N+1}$, 
$\mathsf{\Lambda}_{N+1,N}^{(\beta)}\left(\mathbf{y},\cdot\right)$ is the distribution of the non-increasing roots counted with multiplicity of the random polynomial:
\begin{equation*}
 z\mapsto  \sum_{j=1}^{N+1} \mathfrak{d}_j \prod_{1\le k \le N, k \neq j} (z-y_k),
\end{equation*}
where the vector $\left(\mathfrak{d}_1,\dots,\mathfrak{d}_{N+1} \right)$ is Dirichlet distributed with all parameters equal to $\beta/2$. We note that $\mathsf{\Lambda}_{N+1,N}^{(\beta)}\left(\mathbf{y},\cdot\right)$ is supported on $\mathbf{x}\in \mathbb{W}_N$ such that $\mathbf{x}\prec \mathbf{y}$, see \cite{ForresterBook,OrbitalBeta}.
\end{defn}
 $\mathsf{\Lambda}_{N+1,N}^{(\beta)}$ also has an equivalent explicit expression in terms of the Dixon-Anderson conditional probability distribution, see \cite{ForresterBook,OrbitalBeta}. We need the following definition:
 
 \begin{defn}
 Let $\beta>0$. We say that a random infinite interlacing array $\left\{\mathbf{x}^{(N)} \right\}_{N=1}^\infty$ is consistent if for all $N\ge 1$, the distribution of the first $N$ rows $\left(\mathbf{x}^{(1)},\dots,\mathbf{x}^{(N)}\right)$ is given by:
 \begin{equation*}
m_N\left(d\mathbf{x}^{(N)}\right)\mathsf{\Lambda}_{N,N-1}^{(\beta)}\left(\mathbf{x}^{(N)},d\mathbf{x}^{(N-1)}\right)\mathsf{\Lambda}_{N-1,N-2}^{(\beta)}\left(\mathbf{x}^{(N-1)},d\mathbf{x}^{(N-2)}\right)\cdots \mathsf{\Lambda}_{2,1}^{(\beta)}\left(\mathbf{x}^{(2)},d\mathbf{x}^{(1)}\right),
 \end{equation*}
 where $m_N=\mathsf{Law}\left(\mathbf{x}^{(N)}\right)$. We denote the set of all consistent distributions on $\mathfrak{IA}$ by $\mathcal{M}_p^{\textnormal{c},(\beta)}\left(\mathfrak{IA}\right)$.
 \end{defn}

Recall that $\mathsf{M}^{(\beta)}\in \mathcal{M}_p^{\textnormal{c},(\beta)}\left(\mathfrak{IA}\right)$ is extremal if $\mathsf{M}^{(\beta)}=t\mathsf{M}^{(\beta)}_1+(1-t)\mathsf{M}^{(\beta)}_2$, with $t\in (0,1)$ and $\mathsf{M}^{(\beta)}_1,\mathsf{M}^{(\beta)}_2\in \mathcal{M}_p^{\textnormal{c},(\beta)}\left(\mathfrak{IA}\right)$, implies that $\mathsf{M}^{(\beta)}_1=\mathsf{M}^{(\beta)}_2=\mathsf{M}^{(\beta)}$. We write $\textnormal{Ex}(\mathcal{M}_p^{\textnormal{c},(\beta)}\left(\mathfrak{IA}\right))$  for these extreme points of $\mathcal{M}_p^{\textnormal{c},(\beta)}\left(\mathfrak{IA}\right)$.

Finally, for $\beta>0$, we say that an infinite sequence of probability measures $\left\{\mu_N^{(\beta)}\right\}_{N=1}^\infty$ with $\mu_N^{(\beta)}\in \mathcal{M}_p\left(\mathbb{W}_N\right)$ is consistent (with parameter $\beta$) if:
 \begin{equation}\label{ConsistentMeasures}
   \mu_{N+1}^{(\beta)}\mathsf{\Lambda}_{N+1,N}^{(\beta)}=\mu_N^{(\beta)} , \ \ \forall N\ge 1.
 \end{equation}
By Kolmogorov's theorem consistent sequences of probability measures $\left\{\mu_N^{(\beta)}\right\}_{N=1}^\infty$ are in bijection with $\mathcal{M}_p^{\textnormal{c},(\beta)}\left(\mathfrak{IA}\right)$. In particular, the corresponding consistent distribution $\mathsf{M}^{(\beta)}$ on $\mathfrak{IA}$ provides a natural coupling.

The extremal consistent distributions on $\mathfrak{IA}$ were classified in \cite{OrbitalBeta}, making use of some earlier results from \cite{CuencaOrbital}. It was shown that $\textnormal{Ex}(\mathcal{M}_p^{\textnormal{c},(\beta)}\left(\mathfrak{IA}\right))$ is in correspondence with $\Omega$, see \cite{OrbitalBeta} for the precise statement. We denote an extremal measure parametrised by $\omega \in \Omega$ by $\mathfrak{N}_\omega^{(\beta)}$. These are no longer explicit as in (\ref{ExtremalHermitian}) for $\beta=2$, but they are characterised by their explicit Dunkl transform, see \cite{OrbitalBeta}. Moreover, it was shown that any $\mathsf{M}^{(\beta)}\in \mathcal{M}_p^{\textnormal{c},(\beta)}\left(\mathfrak{IA}\right)$ can be decomposed into extremal measures as follows:
\begin{equation}\label{DecompositionInter}
 \mathsf{M}^{(\beta)}=\int_{\Omega} \mathfrak{N}_\omega^{(\beta)}\nu_{\mathsf{M}^{(\beta)}}(d\omega),
 \end{equation}
for a unique $\nu_{\mathsf{M}^{(\beta)}}\in \mathcal{M}_p(\Omega)$, see \cite{OrbitalBeta}. The map (\ref{DecompositionInter}) is actually a bijection between $\mathcal{M}_p^{\textnormal{c},(\beta)}\left(\mathfrak{IA}\right)$ and $\mathcal{M}_p(\Omega)$, see \cite{OrbitalBeta}. We have the following generalisation of Theorem \ref{HermitianTheorem}.

\begin{thm}\label{InterlacingTheorem}
Let $\beta>0$. Let $\mathsf{M}^{(\beta)}\in \mathcal{M}_p^{\textnormal{c},(\beta)}\left(\mathfrak{IA}\right)$ and denote by $\left\{\mathsf{x}^{(N)}\right\}_{N=1}^\infty$ a random interlacing array with $\mathsf{Law}\left(\left\{\mathsf{x}^{(N)}\right\}_{N=1}^\infty\right)=\mathsf{M}^{(\beta)}$. Then, $\mathsf{M}^{(\beta)}$-a.s. we have uniformly on compact sets in $\mathbb{C}$:
\begin{equation}
  \Psi_{N}\left(\frac{z}{N}\right)=\prod_{i=1}^N\left(1-z\frac{x_i^{(N)}}{N}\right) \overset{N\to\infty}{\longrightarrow} \mathsf{E}_\omega(z) , \textnormal{ where } \mathsf{Law}(\omega)=\nu_{\mathsf{M}^{(\beta)}}.
\end{equation}
Moreover, any random entire function in $\mathcal{LP}$ can be realised in this way.
\end{thm}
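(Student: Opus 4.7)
The plan is to follow the exact template of the proof of Theorem \ref{HermitianTheorem}, with the Borodin--Olshanski ergodic decomposition replaced by its $\beta$-analogue from \cite{OrbitalBeta}. The only ingredient that needs to change is the identification of the a.s.\ limit of the rescaled rows of a consistent interlacing array; once that is in hand, Proposition \ref{ConvergenceProp} does all the analytic work.

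First I would invoke the classification of $\textnormal{Ex}(\mathcal{M}_p^{\textnormal{c},(\beta)}(\mathfrak{IA}))$ from \cite{OrbitalBeta}. Beyond giving the bijection with $\Omega$, that classification characterises the extremal measure $\mathfrak{N}_\omega^{(\beta)}$ as the unique consistent distribution under which the rescaled rows $\{\mathsf{x}^{(N)}/N\}_{N\ge 1}$ satisfy the (O-V) conditions almost surely with deterministic limit point $\omega$ (this is the $\beta$ version of the Olshanski--Vershik / Borodin--Olshanski approximation theorem used in the $\beta=2$ proof; concretely it is the law of large numbers tied to the Dunkl-transform characterisation of $\mathfrak{N}_\omega^{(\beta)}$). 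Combining this with the ergodic decomposition (\ref{DecompositionInter}) then yields: for any $\mathsf{M}^{(\beta)}\in \mathcal{M}_p^{\textnormal{c},(\beta)}(\mathfrak{IA})$, the sequence $\{\mathsf{x}^{(N)}/N\}$ satisfies the (O-V) conditions $\mathsf{M}^{(\beta)}$-a.s., with random limit point $\omega$ of law $\nu_{\mathsf{M}^{(\beta)}}$.

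Second, on the full-measure event where (O-V) holds, Proposition \ref{ConvergenceProp} applies deterministically and gives
\begin{equation*}
\Psi_N\!\left(\frac{z}{N}\right)=\prod_{i=1}^N\!\left(1-z\,\frac{x_i^{(N)}}{N}\right)\overset{N\to\infty}{\longrightarrow}\mathsf{E}_\omega(z)
\end{equation*}
uniformly on compact subsets of $\mathbb{C}$, which is the first claim. For the converse, given an arbitrary random $\mathcal{LP}$ function, use Proposition \ref{MeasureProp} to identify it with some $\nu\in\mathcal{M}_p(\Omega)$ and define $\mathsf{M}^{(\beta)}\stackrel{\textnormal{def}}{=}\int_\Omega \mathfrak{N}_\omega^{(\beta)}\,\nu(d\omega)\in \mathcal{M}_p^{\textnormal{c},(\beta)}(\mathfrak{IA})$; the bijectivity of (\ref{DecompositionInter}) ensures $\nu_{\mathsf{M}^{(\beta)}}=\nu$, and the first part of the theorem does the rest.

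The only step that is not purely formal is the first one: namely, that under $\mathfrak{N}_\omega^{(\beta)}$ the rescaled rows converge (in the (O-V) sense) to the deterministic point $\omega$. For $\beta=2$ this is what \cite{BorodinOlshanskiErgodic} supplies, and for general $\beta>0$ it is the content one reads off from \cite{OrbitalBeta}. If this a.s.\ convergence is not extracted verbatim from \cite{OrbitalBeta}, the main technical task would be to upgrade the classification of $\textnormal{Ex}(\mathcal{M}_p^{\textnormal{c},(\beta)}(\mathfrak{IA}))$ to such a strong law of large numbers, via a standard martingale/backward-martingale argument on the consistent tower $(\mathsf{\Lambda}_{N+1,N}^{(\beta)})_{N\ge 1}$ together with the ergodicity of $\mathfrak{N}_\omega^{(\beta)}$; this is the only place where a nontrivial input beyond Section \ref{SectionConv} is used.
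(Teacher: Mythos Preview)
Your proposal is correct and follows essentially the same approach as the paper: invoke the almost-sure (O-V) conditions for $\{\mathsf{x}^{(N)}/N\}$ from \cite{OrbitalBeta}, apply Proposition \ref{ConvergenceProp}, and for the converse push forward an arbitrary $\nu\in\mathcal{M}_p(\Omega)$ through (\ref{DecompositionInter}). The only difference is that the paper cites Theorem 3.6 of \cite{OrbitalBeta} directly for the a.s.\ convergence (for \emph{any} consistent $\mathsf{M}^{(\beta)}$, not just extremal ones), so your hedging about possibly needing a separate martingale argument is unnecessary.
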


\begin{proof}
From Theorem 3.6 in \cite{OrbitalBeta} we have that the sequence $\left\{\mathsf{x}^{(N)}/N\right\}_{N=1}^\infty$ satisfies the (O-V) conditions $\mathsf{M}^{(\beta)}$-a.s. and moreover if $\omega$ denotes the random limit point, $\mathsf{Law}(\omega)=\nu_{\mathsf{M}^{(\beta)}}$. We then apply Proposition \ref{ConvergenceProp} to get the first statement. For the second statement suppose we are given a random $\mathsf{E}_\omega \in \mathcal{LP}$ with $\mathsf{Law}(\omega)=\nu\in \mathcal{M}_p\left(\Omega\right)$ arbitrary. We then consider $\mathsf{M}\overset{\textnormal{def}}{=}\int_\Omega \mathfrak{N}^{(\beta)}_\omega \nu(d\omega)\in  \mathcal{M}_p^{\textnormal{c},(\beta)}\left(\mathfrak{IA}\right)$ and the conclusion follows from the first statement just proven.
\end{proof}

\subsection{The stochastic Bessel and Hua-Pickrell functions for general $\beta$}

We briefly consider the general $\beta$-ensemble versions of the two random entire functions from Section \ref{DistinguishedCasesBeta=2}. For any $N\ge 1$ and $s \in \mathbb{C}$ with $\Re(s)>-\frac{1}{2}$ we define the probability measure $\mathfrak{M}_N^{\textnormal{HP},s,(\beta)}$ on $\mathbb{W}_N$:
\begin{equation*}
\mathfrak{M}_N^{\textnormal{HP},s,(\beta)}(d\mathbf{x})\propto \prod_{j=1}^N\left(1+ix_j\right)^{-s-\beta(N-1)/2-1}\left(1-ix_j\right)^{-\bar{s}-\beta(N-1)/2-1}\prod_{1\le i<j \le N}\left|x_i-x_j\right|^\beta d\mathbf{x}.
\end{equation*}
Moreover, for any $N\ge 1$ and $\eta>-1$ we define the probability measure $\mathfrak{M}_N^{\textnormal{IL}}$ on $\mathbb{W}_N^+=\mathbb{W}_N\cap \mathbb{R}_+^N$, the non-negative Weyl chamber:
\begin{equation*}
\mathfrak{M}_N^{\textnormal{IL},\eta,(\beta)}(d\mathbf{x})\propto \prod_{j=1}^N x_j^{-\eta-(N-1)\beta-2}e^{-\frac{2}{x_j}}\prod_{1\le i<j \le N}\left|x_i-x_j\right|^\beta \mathbf{1}_{\mathbf{x}\in \mathbb{W}_N^+}d\mathbf{x}.
\end{equation*}
The connection with the Hermitian matrix measures $\mathfrak{M}_N^{\textnormal{HP},s}, \mathfrak{M}_N^{\textnormal{IL},\eta}$ from Section \ref{SectionHermitian} is standard using the Weyl integration formula \cite{ForresterBook}:
\begin{equation*}
  \left(\mathsf{eval}\right)_*\mathfrak{M}_N^{\textnormal{HP},s}=\mathfrak{M}_N^{\textnormal{HP},s,(2)}  , \ \ \left(\mathsf{eval}\right)_*\mathfrak{M}_N^{\textnormal{IL},\eta}=\mathfrak{M}_N^{\textnormal{IL},\eta,(2)}, \ \ \forall N\ge 1.
\end{equation*}
Moreover, it follows from Lemma 2.2 in \cite{NeretinRayleigh} that these measures are consistent:
\begin{equation*}
\mathfrak{M}_{N+1}^{\textnormal{HP},\eta,(\beta)}\mathsf{\Lambda}_{N+1,N}^{(\beta)}=\mathfrak{M}_N^{\textnormal{HP},\eta,(\beta)}, \ \ \mathfrak{M}_{N+1}^{\textnormal{IL},\eta,(\beta)}\mathsf{\Lambda}_{N+1,N}^{(\beta)}=\mathfrak{M}_N^{\textnormal{IL},\eta,(\beta)}, \ \ \forall N\ge 1, \ \forall \beta>0.
\end{equation*}
We write $\mathfrak{M}^{\textnormal{HP},s,(\beta)}, \mathfrak{M}^{\textnormal{IL},\eta,(\beta)}\in \mathcal{M}_p^{\textnormal{c},(\beta)}\left(\mathfrak{IA}\right)$ for the corresponding couplings.

Write $\mathsf{SHP}^{(\beta)}_s(z)$ and $\mathsf{SB}^{(\beta)}_\eta(z)$ for (a realisation) of the random entire functions $\mathsf{E}_\omega$ that we obtain from Theorem \ref{InterlacingTheorem} for $\mathfrak{M}^{\textnormal{HP},s,(\beta)}$ and $\mathfrak{M}^{\textnormal{IL},\eta,(\beta)}$ respectively. Clearly, these specialise, for $\beta=2$, to the functions from Section \ref{DistinguishedCasesBeta=2}.  Our goal is to understand their law, namely $\mathsf{Law}\left(\omega\right)$. As before, we write $\nu_{\textnormal{HP}}^{s,(\beta)}$ and $\nu_{\textnormal{IL}}^{\eta,(\beta)}$ for these laws respectively.

We first consider $\mathsf{SB}^{(\beta)}_\eta(z)$. By non-negativity of eigenvalues we obtain from Remark \ref{RemarkPositive} that $\alpha_i^-\equiv 0$ for all $i\ge 1$ and $\gamma_2\equiv 0$, $\nu_{\textnormal{IL}}^{\eta,(\beta)}$-a.s. Moreover, it  follows\footnote{It was shown, in an equivalent form, in \cite{RiderRamirez} that there exists a coupling $\tilde{\mathfrak{M}}^{\textnormal{IL},\eta,(\beta)}$ of the $\mathfrak{M}_N^{\textnormal{IL},\eta,(\beta)}$'s and a subsequence $\{N_k\}_{k=1}^\infty$ such that $\tilde{\mathfrak{M}}^{\textnormal{IL},\eta,(\beta)}$-a.s.:
\begin{equation*}
 \left(\alpha^{+,(N_k)}\left(\mathbf{x}^{(N_k)}\right),\gamma_1^{(N_k)}\left(\mathbf{x}^{(N_k)}\right)\right) \overset{k\to \infty}{\longrightarrow} \left(\alpha^+, \sum_{i=1}^\infty \alpha_i^+\right).
\end{equation*}
We also know from Theorem 3.6 in \cite{OrbitalBeta} that the whole sequence converges $\mathfrak{M}^{\textnormal{IL},\eta,(\beta)}$-a.s. which defines $(\alpha^+,\gamma_1)$. This identifies the joint distribution under $\nu_{\textnormal{IL}}^{\eta,(\beta)}$ of $\left(\alpha^+,\gamma_1\right)$ as that of $\left(\alpha^+,\sum_{i=1}^\infty \alpha_i^+\right)$ and the conclusion follows. } from the results of \cite{RiderRamirez} that $\gamma_1=\sum_{i=1}^\infty \alpha_i^+$,  $\nu_{\textnormal{IL}}^{\eta,(\beta)}$-a.s. Finally, under $\nu_{\textnormal{IL}}^{\eta,(\beta)}$ the law of the $\alpha^+$ parameters is given by the law of reciprocals of random eigenvalues $\mathfrak{b}^{(\beta)}_\eta(1)<\mathfrak{b}^{(\beta)}_\eta(2)<\mathfrak{b}^{(\beta)}_\eta(3)<\cdots$ of a stochastic operator $\mathfrak{B}_\eta^{(\beta)}$ (which has trace class inverse), see \cite{RiderRamirez} for the precise description. Hence, as before the law of  $\mathsf{SB}^{(\beta)}_{\eta}(z)$ takes the particularly nice form:
\begin{equation*}
 \mathsf{SB}^{(\beta)}_{\eta}(z) \overset{\textnormal{d}}{=} \prod_{i=1}^{\infty}\left(1-\frac{z}{\mathfrak{b}^{(\beta)}_\eta(i)}\right).
\end{equation*}
This is the function (up to matching of the parameters) obtained in \cite{ValkoLin} as the limit of the characteristic polynomial of the real orthogonal $\beta$-ensemble. Finally, in terms of explicit formulae a combinatorial expression for the even moments of $\gamma_1$ was obtained in \cite{CircularJacobiJoint}.

We then turn to $\mathsf{SHP}_s^{(\beta)}(z)$. First, it follows from the results of \cite{ValkoVirag,ValkoViragOperatorLimit,ValkoLin} that the (joint) distribution of the $\alpha^+$ and $-\alpha^-$ parameters, under $\nu_{\textnormal{HP}}^{s,(\beta)}$, is given by the law of the random eigenvalues of a stochastic operator, see \cite{ValkoVirag,ValkoViragOperatorLimit,ValkoLin} precise statements. This is the general $\beta>0$ extension of the determinantal point process with kernel $\mathsf{K}^s_{\textnormal{HP}}$. The distinguished case $s=0$, under the map (up to multiplicative constant) $x\mapsto x^{-1}$ gives the $\beta$-Sine point process, see \cite{ValkoVirag,ValkoViragOperatorLimit}.  Regarding the parameter $\gamma_2$, it should follow\footnote{See in particular Corollary 15 in \cite{ValkoLin}. We note that, the authors in \cite{ValkoLin} work with the equivalent model, called the circular Jacobi $\beta$-ensemble, obtained under the Cayley transform.  We do not discuss the details here. } from the results of \cite{ValkoViragZeta,ValkoLin} that $\gamma_2\equiv 0$, $\nu_{\textnormal{HP}}^{s,(\beta)}$-a.s. Moreover, $\gamma_1$ should be given by the principal value sum formula (\ref{PrincipalValue}), $\nu_{\textnormal{HP}}^{s,(\beta)}$-a.s. In the special case $s=0$ this should be a consequence\footnote{Again, the authors in \cite{ValkoViragZeta} consider the equivalent model of the circular $\beta$-ensemble obtained under an application of the Cayley transform. We do not discuss the details here. } of the results of \cite{ValkoViragZeta}. For $s\neq 0$ it appears that this does not follow from known results. However, it might be possible to extend the proof of \cite{ValkoViragZeta} using the results of \cite{ValkoLin}; this would also give a principal value product formula for $\mathsf{SHP}_s^{(\beta)}(z)$ from Proposition \ref{PrincipalValProp}, see also Remark 24 in \cite{ValkoLin}. We finally note that the distribution of $\gamma_1$ for $s\in \mathbb{N}$ is explicit, see \cite{ForresterJoint}, and explicit formulae for its even moments for any $s$ can be found in \cite{ForresterJoint,CircularJacobiJoint}.

\begin{rmk}\label{Universality}
For any $N\ge 1$, consider the following probability measures on $\mathbb{W}_N^+$:
\begin{equation*}
 \mathfrak{m}^{\mathsf{V},\eta,(\beta)}_N(d\mathbf{x}) \propto \prod_{i=1}^N x_i^\eta e^{-\beta N \mathsf{V}(x_i)} \prod_{1\le i < j\le N}\left|x_i-x_j\right|^\beta \mathbf{1}_{\mathbf{x}\in \mathbb{W}_N^+} d\mathbf{x},
\end{equation*}
where $\mathsf{V}$ is a polynomial so that $x\mapsto \mathsf{V}(x^2)$ is uniformly convex and $\eta,\beta$ satisfy certain restrictions, see \cite{UniversalityStochasticBessel}. These general measures are not (necessarily) consistent, so we do not immediately obtain the (O-V) conditions. However, it is shown in \cite{UniversalityStochasticBessel} that there exists a coupling $\mathfrak{m}^{\mathsf{V},\eta,(\beta)}$ of these measures so that $\mathfrak{m}^{\mathsf{V},\eta,(\beta)}$-a.s. the appropriately rescaled points converge to the random eigenvalues $\mathfrak{b}^{(\beta)}_\eta(1)<\mathfrak{b}^{(\beta)}_\eta(2)<\mathfrak{b}^{(\beta)}_\eta(3)<\cdots$ of the stochastic operator $\mathfrak{B}_\eta^{(\beta)}$. Moreover, under a subsequence the sum of reciprocals of points also converges to the trace of the inverse of  $\mathfrak{B}_\eta^{(\beta)}$. In particular, by virtue of Proposition \ref{ConvergenceProp} and Remark \ref{RemarkPositive}, under this subsequence, we obtain that the characteristic polynomial of the appropriately rescaled points converges to $\mathsf{SB}^{(\beta)}_\eta(z)$. Such a universality result should also hold for the stochastic Airy function of \cite{StochasticAiryFunction}, discussed in the next subsection, if one looks at the renormalised characteristic polynomials of the models considered in \cite{UniversalityStochasticAiry}.
\end{rmk}

\subsection{$\mathcal{LP}$ functions with $\beta$-Airy point process zeros}\label{SectionAiry}

In this subsection we study entire functions in $\mathcal{LP}$ with zeros given by the $\beta$-Airy point process which arises as the universal scaling limit at the soft edge of random matrices. The situation is quite a bit more subtle than before\footnote{As far as we know, there is no canonical consistent family of $\beta$-ensembles, like $\mathfrak{M}_N^{\textnormal{HP},s,(\beta)}$ and $\mathfrak{M}_N^{\textnormal{IL},\eta,(\beta)}$, from which the $\beta$-Airy point process arises under the $N^{-1}$ scaling.}.

For any $N\ge 1$, define the following probability measure on $\mathbb{W}_N$, called the Gaussian $\beta$-ensemble (G$\beta$E):
\begin{equation}
  \mathfrak{M}_{\textnormal{G},N}^{(\beta)}(d\mathbf{x})\propto e^{-\sum_{i=1}^N \beta N x^2_i} \prod_{1\le i < j \le N}|x_i-x_j|^\beta d\mathbf{x}.
\end{equation}
It is known that the points $y_i^{(N)}=2N^{\frac{2}{3}}(x_i^{(N)}-1)$, with $\mathbf{x}^{(N)}$ distributed according to $\mathfrak{M}_{\textnormal{G},N}^{(\beta)}$, converge in distribution to the eigenvalues $\mathfrak{a^{(\beta)}}(1)>\mathfrak{a}^{(\beta)}(2)>\mathfrak{a}^{(\beta)}(3)>\cdots$ of a stochastic operator $\mathsf{A}_{(\beta)}$ formally written as:
\begin{equation*}
   \mathsf{A}_{(\beta)}=\frac{d^2}{dx^2}-x-\frac{2}{\sqrt{\beta}}d\mathsf{B}_x,
\end{equation*}
where $d\mathsf{B}_x$ denotes white noise, see \cite{StochasticAiry} for the precise statement. Moreover, the inverse operator $\mathsf{A}_{(\beta)}^{-1}$ is a.s. well-defined and also Hilbert-Schmidt, see \cite{HardToSoft}. The point process of random eigenvalues of $\mathsf{A}_{(\beta)}$ is called the $\beta$-Airy point process.

One might then hope that the following characteristic polynomial converges:
\begin{equation*}
   \prod_{i=1}^N\left(1-\frac{z}{y_i^{(N)}}\right)\overset{?}{\longrightarrow},
\end{equation*}
but this is not the case! It is easy to see that the $\gamma^{(N)}_1$ parameter, namely the sum of points $(y_i^{(N)})^{-1}$, diverges. Some heuristics\footnote{If we assume that we can use the test function $f(x)=\frac{1}{2}(1-x)^{-1}$ in the standard convergence of the empirical measure of G$\beta$E to the semicircle law on $[-1,1]$ (of course $f$ has a singularity at the endpoint of the support), we then have:
\begin{equation*}
\sum_{i=1}^{N}\frac{1}{y_i^{(N)}}=-N^{\frac{1}{3}}\frac{1}{N}\sum_{i=1}^N\frac{1}{2(1-\lambda_i^{(N)})}\sim -N^{\frac{1}{3}}\int_{-1}^1 \frac{1}{2}(1-x)^{-1}\frac{2}{\pi}\sqrt{1-x^2}dx=-N^{\frac{1}{3}}.
\end{equation*}
Naively, one could try to push this heuristic further to predict Gaussian fluctuations but the resulting variance is infinite. Determining the distribution of $\gamma_1$ appears to be more delicate.} predict that it should grow as $-N^{\frac{1}{3}}$. We might then hope that if we subtract this term it will converge\footnote{This is indeed the case and it is a consequence of the results below. We do not have a direct proof however.}. Finally, the $\delta^{(N)}$ parameter, namely the sum of $(y_i^{(N)})^{-2}$, should converge\footnote{This is again the case and it is a consequence of the results below. We do not have a direct proof however.} and in fact be equal\footnote{This more refined claim does not follow from the results below. If it were proven we would also obtain a positive answer to Question 2 of \cite{StochasticAiryFunction} on whether the stochastic Airy function discussed shortly is a.s. of order $\frac{3}{2}$. This implication can be seen as follows. From Theorem 7 in Chapter 1 of \cite{LevinEntire} we have that the order of the canonical product of a sequence is equal to its convergence exponent, see \cite{LevinEntire} for this terminology. Then, from the almost sure asymptotics for the $\beta$-Airy point process in Theorem 6.1 of \cite{ViragICM} we get that the convergence exponent in this case is equal to $\frac{3}{2}$. In particular, if one shows that the stochastic Airy function has order strictly less than $2$ (so that $\gamma_2\equiv 0$), then it has order $\frac{3}{2}$.} to the square Hilbert-Schmidt norm of  $\mathsf{A}_{(\beta)}^{-1}$, so that $\gamma_2=0$.
 
Armed with these heuristics we are led to consider the following renormalised polynomial:
\begin{equation*}
    \mathfrak{P}_N(z)\overset{\textnormal{def}}{=}e^{-N^{\frac{1}{3}}z}\prod_{i=1}^N\left(1-\frac{z}{y_i^{(N)}}\right)
\end{equation*}
and ask whether it will converge. This natural question was answered in the affirmative by Lambert and Paquette in \cite{StochasticAiryFunction,Lambert2020strong}, who showed convergence as real-analytic functions. 

Before stating their result we need some notation. We denote by $\mathsf{SAi}^{(\beta)}(z)$ the so-called stochastic Airy function constructed in \cite{StochasticAiryFunction} normalised\footnote{In the notation of \cite{StochasticAiryFunction} we have $\mathsf{SAi}^{(\beta)}(z)=\frac{\textnormal{SAi}_z(0)}{\textnormal{SAi}_{0}(0)}$.}  so that $\mathsf{SAi}^{(\beta)}(0)=1$. This arises from the unique solution, subject to some asymptotic condition, of a certain stochastic equation and it is given as a deterministic functional of a single Brownian path $\left(\mathsf{w}_t;t\ge 0\right)$, see the introduction of \cite{StochasticAiryFunction} for the details. Furthermore, it is proven there that $\mathsf{SAi}^{(\beta)}(z)$ is an entire function. Finally, the authors constructed a certain coupling of the measures $\mathfrak{M}_{\textnormal{G},N}^{(\beta)}$ and the Brownian path $\left(\mathsf{w}_t;t\ge 0\right)$, which gives rise to $\mathsf{SAi}^{(\beta)}(z)$, that we denote by $\mathfrak{M}_{\textnormal{G}}^{(\beta)}$. The following is then a special case of the main result of \cite{StochasticAiryFunction} (see the display before Question 3 in \cite{StochasticAiryFunction}):

\begin{thm}\cite{StochasticAiryFunction}\label{StochasticAiryThm} Let $\beta>0$. Then, $\mathfrak{M}^{(\beta)}_{\textnormal{G}}$-a.s. we have convergence of the function and all its derivatives uniformly on compact sets in $\mathbb{R}$:
\begin{equation}\label{AiryConvergence}
  \mathfrak{P}_N(z) \overset{N\to\infty}{\longrightarrow}  \mathsf{SAi}^{(\beta)}(z), \ \ z\in \mathbb{R}.
\end{equation}
\end{thm}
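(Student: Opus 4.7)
The plan is to derive the theorem from the extension of Proposition \ref{ConvergenceProp} recorded in Remark \ref{RemarkExtension}, which in fact yields uniform convergence on compact sets in $\mathbb{C}$ (and hence, by Weierstrass, convergence of all derivatives on $\mathbb{R}$, which is stronger than what is stated). I would take $x_i^{(N)}=1/y_i^{(N)}$, re-ordered so as to lie in $\mathbb{W}_N$, and set $\mathsf{C}_1^{(N)}=N^{1/3}$, $\mathsf{C}_2^{(N)}=0$; then $\mathfrak{P}_N(z)$ is exactly the renormalised polynomial of the footnote to Proposition \ref{ConvergenceProp}. By that extension it suffices to produce a random point $\omega\in\hat{\Omega}$ for which, $\mathfrak{M}_{\textnormal{G}}^{(\beta)}$-a.s., the quadruple
\begin{equation*}
\bigl(\alpha^{+,(N)}(\mathbf{x}^{(N)}),\,\alpha^{-,(N)}(\mathbf{x}^{(N)}),\,\gamma_1^{(N)}(\mathbf{x}^{(N)})+\mathsf{C}_1^{(N)},\,\delta^{(N)}(\mathbf{x}^{(N)})\bigr)
\end{equation*}
converges coordinate-wise in $\hat{\Omega}$, and then to identify $\mathsf{E}_\omega$ with $\mathsf{SAi}^{(\beta)}$.

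Three convergences must be verified in sequence. First, the individual $\alpha_i^{\pm,(N)}$ converge to the positive, respectively negative, parts of $1/\mathfrak{a}^{(\beta)}(i)$; this follows from the soft-edge convergence of G$\beta$E to the $\beta$-Airy point process established in \cite{StochasticAiry}. Second, the renormalised linear trace $\gamma_1^{(N)}+N^{1/3}=\sum_{i=1}^N (y_i^{(N)})^{-1}+N^{1/3}$ must converge a.s.~to some $\gamma_1\in\mathbb{R}$. Third, the quadratic trace $\delta^{(N)}=\sum_{i=1}^N (y_i^{(N)})^{-2}$ must converge a.s.~to $\|\mathsf{A}_{(\beta)}^{-1}\|_{HS}^2$, which is expected to follow from Hilbert-Schmidt convergence of the tridiagonal G$\beta$E operators to $\mathsf{A}_{(\beta)}$ in the spirit of \cite{HardToSoft}. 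With these three in hand, Remark \ref{RemarkExtension} delivers uniform convergence of $\mathfrak{P}_N$ to $\mathsf{E}_\omega$ on compacts in $\mathbb{C}$; identification of $\mathsf{E}_\omega$ with $\mathsf{SAi}^{(\beta)}$ then follows by matching zero sets (extracted from the $\alpha^\pm$ data) together with the normalisation $\mathsf{SAi}^{(\beta)}(0)=1$ and the membership $\mathsf{SAi}^{(\beta)}\in\mathcal{LP}$ proved in \cite{StochasticAiryFunction}.

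The principal obstacle is the second convergence. The $N^{1/3}$ shift must be subtracted with exactly the right constant, and the residual fluctuations of $\sum_i (y_i^{(N)})^{-1}$ must be controlled \emph{globally} across the spectrum, not merely at the soft edge, because the relevant linear statistic $x\mapsto (1-x)^{-1}$ is singular at the edge of the semicircle and even its variance under the semicircle law diverges. No purely edge-based argument can capture this global information. For an actual proof I would therefore follow the route of \cite{StochasticAiryFunction}: invoke the Dumitriu-Edelman tridiagonal model for G$\beta$E, derive a three-term recurrence for $\mathfrak{P}_N(z)$, rescale the recurrence index by $N^{1/3}$ to obtain a discrete random dynamical system, and identify its scaling limit with the SDE characterising $\mathsf{SAi}^{(\beta)}$. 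This bypasses independent verification of the parameter convergences above; once convergence of $\mathfrak{P}_N$ is obtained, those parameters can be read off a posteriori via the converse direction of Proposition \ref{ConvergenceProp}.
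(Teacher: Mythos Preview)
This theorem is not proved in the paper at all: it is quoted verbatim as a result of Lambert--Paquette \cite{StochasticAiryFunction}, and the paper then uses it as a black box input to Proposition~\ref{PropositionExtension} in order to upgrade the real convergence to complex and to conclude that $\mathsf{SAi}^{(\beta)}\in\mathcal{LP}$. Your final paragraph recognises this correctly: the actual proof is the tridiagonal/recurrence/SDE argument of \cite{StochasticAiryFunction}, and the paper simply cites it. So in that sense your proposal and the paper agree.

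Your first two paragraphs, however, propose an alternative route via Remark~\ref{RemarkExtension}, and it is worth being explicit about why this cannot work as an independent proof. The paper itself says, in the footnotes immediately preceding the theorem, that the convergence of $\gamma_1^{(N)}+N^{1/3}$ and of $\delta^{(N)}$ are \emph{consequences} of Theorem~\ref{StochasticAiryThm} (read off a posteriori via the converse direction of Proposition~\ref{ConvergenceProp}), and that no direct proof is available. So invoking those convergences as hypotheses to feed into Remark~\ref{RemarkExtension} is circular. You diagnose the difficulty with the linear trace accurately; the same applies to $\delta^{(N)}$, for which the paper explicitly states that even the refined claim $\gamma_2=0$ (equivalently $\delta=\|\mathsf{A}_{(\beta)}^{-1}\|_{HS}^2$) does not follow from the results of the paper and is tied to the open Question~2 of \cite{StochasticAiryFunction} about the order of $\mathsf{SAi}^{(\beta)}$.

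One further gap in your outline: the proposed identification of $\mathsf{E}_\omega$ with $\mathsf{SAi}^{(\beta)}$ by ``matching zero sets together with the normalisation $\mathsf{SAi}^{(\beta)}(0)=1$ and membership in $\mathcal{LP}$'' is insufficient. Two $\mathcal{LP}$ functions with the same zero set and the same value at $0$ can still differ by a factor $e^{az+bz^2}$; you must also match $\gamma_1$ and $\gamma_2$, i.e.\ the first two derivatives at $0$. (Moreover, the membership $\mathsf{SAi}^{(\beta)}\in\mathcal{LP}$ is deduced in this paper from Theorem~\ref{StochasticAiryThm} via Proposition~\ref{PropositionExtension}, so using it as an input here is again circular.) This is moot once you fall back on the \cite{StochasticAiryFunction} argument, which produces $\mathsf{SAi}^{(\beta)}$ directly as the limit rather than via parameter identification.
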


The theorem in \cite{StochasticAiryFunction} is more general, it involves an additional parameter $t\in \mathbb{R}$ and most importantly it is quantitative. Moreover, the authors in \cite{StochasticAiryFunction} go on to mention that they expect that their result (including the quantitative part) extends to $z\in \mathbb{C}$ but for technical reasons they restricted themselves to a statement about real-analytic functions. We now show that if one only cares about the convergence statement (at least in the form above), without any quantitative information, then a short complex analysis argument suffices to do this. We need the following proposition. The proof is given in Section \ref{SectionProofs}.

\begin{prop}\label{PropositionExtension}
Suppose $\mathbf{x}^{(N)}\in \mathbb{W}_N$ and $\mathsf{c}^{(N)}\in \mathbb{R}$ for all $N\ge 1$. Assume the entire functions
\begin{equation*}
 \mathsf{\Phi}_N(z)=e^{\mathsf{c}^{(N)}z}\prod_{i=1}^N\left(1-zx_i^{(N)}\right),
\end{equation*}
along with their first and second derivatives, converge uniformly in a real neighbourhood of the origin to some entire function $\mathsf{E}$, and its first and second derivatives respectively. Then, this convergence of entire functions holds uniformly on compact sets in $\mathbb{C}$ and moreover $\mathsf{E}$ is in $\mathcal{LP}$.
\end{prop}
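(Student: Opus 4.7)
The plan is to realise each $\mathsf{\Phi}_N$ as an element of $\mathcal{LP}$ and then invoke the continuity machinery of Remark \ref{RemarkExtension}. Let $\alpha^{\pm,(N)} = \alpha^{\pm,(N)}(\mathbf{x}^{(N)})$ be the parameters attached to $\mathbf{x}^{(N)}$ as in the definition preceding Proposition \ref{ConvergenceProp}, and set $\delta^{(N)} := \sum_i (x_i^{(N)})^2$ and $\tilde{\gamma}_1^{(N)} := \sum_i x_i^{(N)} - \mathsf{c}^{(N)}$. A direct computation combining the exponential factors in the definition of $\mathsf{E}_\omega$ with the $e^{\mathsf{c}^{(N)} z}$ factor in $\mathsf{\Phi}_N$ yields $\mathsf{\Phi}_N = \mathsf{E}_{\omega_N}$, where $\omega_N := (\alpha^{+,(N)}, \alpha^{-,(N)}, \tilde{\gamma}_1^{(N)}, \delta^{(N)}) \in \hat{\Omega}$ (note $\sum_i (\alpha_i^{+,(N)})^2 + \sum_i (\alpha_i^{-,(N)})^2 = \delta^{(N)}$, i.e.\ $\gamma_2^{(N)} = 0$). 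So each $\mathsf{\Phi}_N$ already lies in $\mathcal{LP}$, and the task reduces to showing $\omega_N$ converges in $\hat{\Omega}$ to some $\omega^\infty$ with $\mathsf{E}_{\omega^\infty} = \mathsf{E}$.

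Next I extract convergence of $\omega_N$ along a subsequence. Evaluating at $z = 0$ using $\mathsf{\Phi}_N(0) = 1$ gives $\mathsf{\Phi}_N'(0) = -\tilde{\gamma}_1^{(N)}$ and $\mathsf{\Phi}_N''(0) - \mathsf{\Phi}_N'(0)^2 = (\log \mathsf{\Phi}_N)''(0) = -\delta^{(N)}$, so the hypothesis on the first and second derivatives forces $\tilde{\gamma}_1^{(N)} \to \tilde{\gamma}_1^\infty \in \mathbb{R}$ and $\delta^{(N)} \to \delta^\infty \in \mathbb{R}_+$. Each $\alpha_i^{\pm,(N)} \le \sqrt{\delta^{(N)}}$ is then uniformly bounded in $N$, so a standard diagonal extraction yields a subsequence $\{N_k\}$ along which $\alpha_i^{\pm,(N_k)} \to \alpha_i^{\pm,\infty}$ for every $i \ge 1$. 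These limits remain non-increasing in $i$, and Fatou gives $\sum_i (\alpha_i^{+,\infty})^2 + \sum_i (\alpha_i^{-,\infty})^2 \le \delta^\infty$, so $\omega^\infty := (\alpha^{+,\infty}, \alpha^{-,\infty}, \tilde{\gamma}_1^\infty, \delta^\infty) \in \hat{\Omega}$ and $\omega_{N_k} \to \omega^\infty$ in the coordinate-wise topology. By Remark \ref{RemarkExtension}, $\mathsf{\Phi}_{N_k} = \mathsf{E}_{\omega_{N_k}} \to \mathsf{E}_{\omega^\infty}$ uniformly on compact sets in $\mathbb{C}$. On the given real neighbourhood of the origin the hypothesis already gives $\mathsf{\Phi}_{N_k} \to \mathsf{E}$, so $\mathsf{E} = \mathsf{E}_{\omega^\infty}$ there, and hence on all of $\mathbb{C}$ by analytic continuation; in particular $\mathsf{E} \in \mathcal{LP}$.

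Finally, to upgrade subsequential convergence to convergence of the full sequence, the same argument applied to an arbitrary subsequence of $\{\mathsf{\Phi}_N\}$ produces a sub-subsequence converging uniformly on compacts in $\mathbb{C}$ to an element of $\mathcal{LP}$ that must agree with $\mathsf{E}$ on the real neighbourhood, and therefore coincides with $\mathsf{E}$ everywhere. Since the space of entire functions under uniform-on-compacts convergence is metrizable, this forces $\mathsf{\Phi}_N \to \mathsf{E}$ uniformly on compact sets in $\mathbb{C}$. The crucial role of the hypothesis on $\mathsf{\Phi}_N''$ is exactly the identification $\delta^{(N)} = \mathsf{\Phi}_N'(0)^2 - \mathsf{\Phi}_N''(0)$, which provides tightness of $\omega_N$ in $\hat{\Omega}$; without controlling $\delta^{(N)}$ the $\alpha_i^{\pm,(N)}$'s could escape to infinity and Remark \ref{RemarkExtension} would not apply.
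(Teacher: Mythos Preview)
Your proof is correct. Both arguments extract convergence of $\tilde{\gamma}_1^{(N)}$ and $\delta^{(N)}$ from the derivative hypothesis at the origin, obtain compactness, and conclude via the identity theorem; the underlying ideas are the same. The organisational difference is that you parametrise each $\mathsf{\Phi}_N$ as a point $\omega_N\in\hat{\Omega}$, extract a convergent subsequence in $\hat{\Omega}$ by a diagonal argument, and then invoke the continuity result of Remark~\ref{RemarkExtension} as a black box, which immediately delivers both the uniform convergence on compacts and the membership $\mathsf{E}=\mathsf{E}_{\omega^\infty}\in\mathcal{LP}$. The paper instead works directly in the space of entire functions: it uses the same inequality $|(1-z)e^{z}|\le e^{4|z|^{2}}$ to bound $|\mathsf{\Phi}_N|$ uniformly on compacts, applies Montel's theorem for normality, and then separately argues that $\mathsf{E}\in\mathcal{LP}$ via Hurwitz and Hadamard factorisation. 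Your route is a little more economical since it reuses the continuity machinery already set up and avoids redoing the Hadamard step; the paper's route is more self-contained.
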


We apply Proposition \ref{PropositionExtension}, taking as input Theorem \ref{StochasticAiryThm}, to obtain:

\begin{prop}
In the setting of Theorem \ref{StochasticAiryThm}, the convergence in (\ref{AiryConvergence}) extends to convergence of entire functions on compact sets in $\mathbb{C}$. Moreover,  $\mathsf{SAi}^{(\beta)}(z)$ is a.s. in $\mathcal{LP}$.
\end{prop}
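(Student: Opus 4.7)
The plan is to apply Proposition \ref{PropositionExtension} directly, taking Theorem \ref{StochasticAiryThm} as the input. First I would identify the data: after reordering the reciprocals $\{1/y_i^{(N)}\}_{i=1}^N$ so that the resulting vector $\mathbf{x}^{(N)}$ lies in $\mathbb{W}_N$ (a permutation that leaves the polynomial unchanged), we may write
$$\mathfrak{P}_N(z) = \prod_{i=1}^N \bigl(1 - z x_i^{(N)}\bigr)\, e^{\mathsf{c}^{(N)} z}, \qquad \mathsf{c}^{(N)} := -N^{1/3},$$
which is precisely the template $\mathsf{\Phi}_N$ appearing in the hypothesis of Proposition \ref{PropositionExtension}.

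Second, Theorem \ref{StochasticAiryThm} supplies, under the coupling $\mathfrak{M}^{(\beta)}_{\textnormal{G}}$ and almost surely, uniform convergence on compact subsets of $\mathbb{R}$ of $\mathfrak{P}_N$ together with every derivative to $\mathsf{SAi}^{(\beta)}$ and its corresponding derivatives. In particular, on some real neighbourhood of the origin, $\mathfrak{P}_N$, $\mathfrak{P}_N'$ and $\mathfrak{P}_N''$ converge uniformly to $\mathsf{SAi}^{(\beta)}$, $\bigl(\mathsf{SAi}^{(\beta)}\bigr)'$ and $\bigl(\mathsf{SAi}^{(\beta)}\bigr)''$ respectively. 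This is exactly the hypothesis required by Proposition \ref{PropositionExtension}.

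Third, applying Proposition \ref{PropositionExtension} on the full-probability event just described yields both conclusions simultaneously: the convergence $\mathfrak{P}_N \to \mathsf{SAi}^{(\beta)}$ extends to uniform convergence on compact subsets of $\mathbb{C}$, and the limit $\mathsf{SAi}^{(\beta)}$ lies in $\mathcal{LP}$, both $\mathfrak{M}^{(\beta)}_{\textnormal{G}}$-almost surely.

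There is no essential obstacle at this level: once Lambert--Paquette's coupling is in hand, the argument is deterministic, and all the genuine work has been front-loaded. The probabilistic content is encapsulated in Theorem \ref{StochasticAiryThm}, while the complex-analytic content lives in Proposition \ref{PropositionExtension}. The only subtle point, which is handled inside the proof of that proposition in Section \ref{SectionProofs}, is that one cannot appeal to Proposition \ref{ConvergenceProp} directly: the exponential factor $e^{\mathsf{c}^{(N)} z}$ with diverging $\mathsf{c}^{(N)} = -N^{1/3}$ prevents separate identification of the polynomial and exponential pieces from the values of $\mathfrak{P}_N$ alone, so one must use the convergence of the first two derivatives at the origin to pin down $\mathsf{c}^{(N)}$ and the second symmetric function of the $x_i^{(N)}$, after which a normal-families argument upgrades real-analytic convergence to complex-analytic convergence and Hurwitz-type considerations place the limit in $\mathcal{LP}$.
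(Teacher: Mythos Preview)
Your proposal is correct and follows exactly the paper's approach: the paper's proof is the single sentence ``We apply Proposition \ref{PropositionExtension}, taking as input Theorem \ref{StochasticAiryThm}, to obtain [the proposition]'', and your write-up simply unpacks this application with the natural identifications $x_i^{(N)} = 1/y_i^{(N)}$ and $\mathsf{c}^{(N)} = -N^{1/3}$.
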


We also offer a more abstract result. We write $\mathcal{M}_p^{\mathfrak{Ai},(\beta)}\left(\Omega\right)$ for the subset of probability measures $\nu\in \mathcal{M}_p\left(\Omega\right)$ such that under $\nu$ the $\alpha^+$ and $-\alpha^-$ parameters have the same law as the spectrum of $\mathsf{A}_{(\beta)}^{-1}$. 

In particular, the distribution of the zeros of the corresponding $\mathcal{LP}$ entire function is exactly the $\beta$-Airy point process. Observe that, there is a distinguished $\nu_{\mathsf{SAi}^{(\beta)}}\in \mathcal{M}_p^{\mathfrak{Ai},(\beta)}\left(\Omega\right)$ giving rise to $\mathsf{SAi}^{(\beta)}(z)$ that should be supported on $\Omega_0=\left\{\omega\in \Omega:\gamma_2=0\right\}$. 

We now show that any random $\mathcal{LP}$ function with $\beta$-Airy point process zeros can be obtained in a unique way as a limit of characteristic polynomials of unitarily invariant matrices without the need of renormalisation as for G$\beta$E.

\begin{prop}
Let $\beta>0$ and suppose $\nu\in \mathcal{M}_p^{\mathfrak{Ai},(\beta)}\left(\Omega\right)$. Then, there exists a unique $\mathsf{M}_{(\nu)}\in \mathcal{M}_p^{\textnormal{inv}}\left(\mathbb{H}\right)$ such that if $\mathbf{H}\in \mathbb{H}$ with $\mathsf{Law}\left(\mathbf{H}\right)=\mathsf{M}_{(\nu)}$, we have $\mathsf{M}_{(\nu)}$-a.s. uniformly on compact sets in $\mathbb{C}$:
\begin{equation*}
 \det\left(\mathbf{I}-\frac{z}{N}\pi_N^\infty\left(\mathbf{H}\right)\right)  \overset{N\to\infty}{\longrightarrow} \mathsf{E}_\omega(z) , \textnormal{ where } \mathsf{Law}(\omega)=\nu   .
\end{equation*}
Moreover, for any $\hat{\beta}>0$ there exists a unique $\mathsf{M}^{(\hat{\beta})}_{(\nu)}\in \mathcal{M}_p^{\textnormal{c},(\hat{\beta})}\left(\mathfrak{IA}\right) $ such that if $\left\{\mathbf{x}^{(N)}\right\}_{N=1}^\infty \in \mathfrak{IA}$ with $\mathsf{Law}\left(\left\{\mathbf{x}^{(N)}\right\}_{N=1}^\infty\right)=\mathsf{M}_{(\nu)}^{(\hat{\beta})}$ then $\mathsf{M}_{(\nu)}^{(\hat{\beta})}$-a.s. we have uniformly on compact sets in $\mathbb{C}$:
\begin{equation*}
   \Psi_{N}\left(\frac{z}{N}\right)=\prod_{i=1}^N\left(1-z\frac{x_i^{(N)}}{N}\right) \overset{N\to\infty}{\longrightarrow} \mathsf{E}_\omega(z) , \textnormal{ where } \mathsf{Law}(\omega)=\nu.
\end{equation*}
\end{prop}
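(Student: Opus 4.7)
The plan is to deduce this proposition directly from Theorems \ref{HermitianTheorem} and \ref{InterlacingTheorem}, together with the bijectivity of the ergodic-decomposition maps $\mathsf{M}\mapsto \nu_{\mathsf{M}}$ from $\mathcal{M}_p^{\textnormal{inv}}(\mathbb{H})$ to $\mathcal{M}_p(\Omega)$ established in \cite{BorodinOlshanskiErgodic} and $\mathsf{M}^{(\hat{\beta})}\mapsto \nu_{\mathsf{M}^{(\hat{\beta})}}$ from $\mathcal{M}_p^{\textnormal{c},(\hat{\beta})}(\mathfrak{IA})$ to $\mathcal{M}_p(\Omega)$ established in \cite{OrbitalBeta}. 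I observe that the hypothesis $\nu\in \mathcal{M}_p^{\mathfrak{Ai},(\beta)}(\Omega)$ is not used anywhere in the argument; it only fixes which distinguished subclass of random $\mathcal{LP}$ functions is being singled out. The proof applies verbatim to any $\nu\in \mathcal{M}_p(\Omega)$.

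For existence in the Hermitian case, given $\nu\in \mathcal{M}_p(\Omega)$ I would set
\begin{equation*}
\mathsf{M}_{(\nu)}\overset{\textnormal{def}}{=}\int_{\Omega}\mathfrak{N}_\omega\,\nu(d\omega)\in \mathcal{M}_p^{\textnormal{inv}}\left(\mathbb{H}\right),
\end{equation*}
which by construction satisfies $\nu_{\mathsf{M}_{(\nu)}}=\nu$. Theorem \ref{HermitianTheorem} applied to $\mathsf{M}_{(\nu)}$ then yields the claimed almost sure uniform convergence on compact sets of $\mathbb{C}$ with limiting law $\nu$. The construction for general $\hat{\beta}>0$ is completely parallel: one sets
\begin{equation*}
\mathsf{M}^{(\hat{\beta})}_{(\nu)}\overset{\textnormal{def}}{=}\int_{\Omega}\mathfrak{N}^{(\hat{\beta})}_\omega\,\nu(d\omega)\in \mathcal{M}_p^{\textnormal{c},(\hat{\beta})}\left(\mathfrak{IA}\right)
\end{equation*}
and invokes Theorem \ref{InterlacingTheorem}.

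For uniqueness, suppose $\mathsf{M}\in \mathcal{M}_p^{\textnormal{inv}}(\mathbb{H})$ is any invariant law for which the rescaled characteristic polynomials $\det(\mathbf{I}-\frac{z}{N}\pi_N^\infty(\mathbf{H}))$ converge almost surely, uniformly on compacts, to $\mathsf{E}_\omega$ with $\mathsf{Law}(\omega)=\nu$. Applying Theorem \ref{HermitianTheorem} to $\mathsf{M}$ identifies the law of the limit as $\nu_{\mathsf{M}}$, so $\nu_{\mathsf{M}}=\nu$; by injectivity of the Borodin--Olshanski map (\ref{DecompositionMatrices}) this forces $\mathsf{M}=\mathsf{M}_{(\nu)}$. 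The identical argument using injectivity of the map (\ref{DecompositionInter}) from \cite{OrbitalBeta} handles uniqueness of $\mathsf{M}^{(\hat{\beta})}_{(\nu)}$.

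There is no real obstacle here beyond invoking the two main theorems and the two ergodic-decomposition bijections; this proposition is a bookkeeping consequence of work already done. The only subtle point worth emphasising is that uniqueness is internal to $\mathcal{M}_p^{\textnormal{inv}}(\mathbb{H})$ (respectively $\mathcal{M}_p^{\textnormal{c},(\hat{\beta})}(\mathfrak{IA})$): outside these classes there will in general be many other laws giving rise to the same limiting random entire function, as already illustrated by the renormalised G$\beta$E example that motivates the proposition.
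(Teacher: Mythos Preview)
Your proof is correct and follows essentially the same route as the paper: existence via Theorems \ref{HermitianTheorem} and \ref{InterlacingTheorem}, and uniqueness via the bijectivity of the decomposition maps (\ref{DecompositionMatrices}) and (\ref{DecompositionInter}). Your observation that the hypothesis $\nu\in \mathcal{M}_p^{\mathfrak{Ai},(\beta)}(\Omega)$ plays no role in the argument is also accurate.
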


\begin{proof}
For the first statement we apply Theorem \ref{HermitianTheorem}. Uniqueness is a consequence of the way $\mathsf{M}_{(\nu)}$ is constructed through the map (\ref{DecompositionMatrices}) which is a bijection. For the second statement we apply Theorem \ref{InterlacingTheorem}. Uniqueness is similarly a consequence of the way $\mathsf{M}^{(\beta)}_{(\nu)}$ is constructed through (\ref{DecompositionInter}) which is also a bijection. 
\end{proof}

\section{Proofs of complex analysis results}\label{SectionProofs}

\begin{proof}[Proof of Proposition \ref{ConvergenceProp}]
We first prove the only if direction. Observe that, from the (O-V) conditions there exists some (finite) constant $K$ such that:
\begin{align*}
\sup_{N\ge 1} \left|\gamma^{(N)}_1\left(\mathbf{x}^{(N)}\right)\right| \le K, \ \ \sup_{N\ge 1}  \delta^{(N)}\left(\mathbf{x}^{(N)}\right)\le K.
\end{align*}
Moreover, using the inequality $\left|(1-z)e^{z}\right|\le e^{4|z|^2}$, valid for any $z\in \mathbb{C}$ we can bound:
\begin{align}\label{bound}
  \left|\Psi_N(z)e^{\gamma^{(N)}_1\left(\mathbf{x}^{(N)}\right)z}\right|&=\left| \prod_{i=1}^N\left(1-zx_i^{(N)}\right)e^{zx_i^{(N)}}\right| \le e^{4 \delta^{(N)}\left(\mathbf{x}^{(N)}\right)|z|^2},\nonumber\\
  \left|\Psi_N(z)\right|&\le e^{\left|\gamma^{(N)}_1\left(\mathbf{x}^{(N)}\right)\right||z|+4\delta^{(N)}\left(\mathbf{x}^{(N)}\right)|z|^2}\le e^{K(|z|+4|z|^2)},\ \ \forall z \in \mathbb{C}.
\end{align}
Hence, we obtain that $\{\Psi_N(z)\}_{N=1}^\infty$ is uniformly bounded on any compact set in $\mathbb{C}$. Then, by Montel's theorem \cite{Ahlfors} the sequence $\{\Psi_N(z)\}_{N=1}^\infty$ is normal: every subsequence has a subsubsequence converging uniformly on compact sets in $\mathbb{C}$ to some entire function $E(z)$ (which a-priori depends on the subsubsequence). If we show that all these possible limit functions coincide and are in fact equal to $\mathsf{E}_\omega(z)$ then we obtain (\ref{ConvergenceDisplay}).

Towards this end let $\{\Psi_{N_k}(z)\}_{k=1}^\infty$ be an arbitrary converging subsubsequence and let $E(z)$ be the corresponding limit entire function. By Hurwitz's theorem \cite{Ahlfors}  $E(z)$ will have only real zeros, if it has any. Furthermore, note that by the identity theorem for analytic functions \cite{Ahlfors}, since $E(0)=1$, the zeros of $E(z)$ are bounded away from $0$ and in general cannot have any accumulation points (since then $E(z)$ would be identically zero). Let us denote the reciprocals of the zeros by $\pm \beta_i^{\pm}$, with $\beta_i^{\pm}\ge 0$, ordered in the following fashion (by convention, if either sequence $\{\beta_i^-\}$, $\{ \beta_i^+\}$ is finite we append to it infinitely many $0$'s): $-\beta_1^-\le -\beta_2^- \le-\beta_3^-\le \cdots \le \beta_3^+ \le \beta_2^+ \le \beta_1^+$. Recall that the $x_i^{(N_k)}$ are the reciprocals of the roots of $\Psi_{N_k}(z)$. Hence, again by Hurwitz's theorem \cite{Ahlfors} (since in a small disk about a root $\frac{1}{\beta_i^+}, -\frac{1}{\beta_i^-}$ of order $r$, $\Psi_{N_k}(z)$ will have exactly $r$ roots for $k$ large enough) we obtain:
\begin{equation*}
  \lim_{k\to \infty} \alpha_i^{\pm,(N_k)}\left(\mathbf{x}^{(N_k)}\right)=  \beta_i^{\pm}, \ \ \forall i \ge 1.
\end{equation*}
In particular, by the (O-V) conditions for the $\alpha^{\pm}$ parameters we get $\beta_i^{\pm}=\alpha_i^{\pm}$ for all $i\ge 1$.

Now, by Hadamard's factorisation theorem \cite{Ahlfors} $E(z)$ (since from (\ref{bound}) it is of order at most $2$ and $E(0)=1$) has a factorisation in the following form:
\begin{align*}
E(z)=e^{\mathsf{c}_1z+\mathsf{c}_2z^2} \prod_{i=1}^\infty e^{z\alpha_i^+}\left(1-z\alpha_i^+\right)\prod_{i=1}^\infty e^{-z \alpha_i^-}\left(1+z\alpha_i^-\right). 
\end{align*}
Then, using the fact that:
\begin{align*}
    \frac{d}{dz}\Psi_{N_k}(z)\bigg|_{z=0}  \overset{k\to\infty}{\longrightarrow}\frac{d}{dz}E(z)\bigg|_{z=0}, \ \ \frac{d^2}{dz^2}\Psi_{N_k}(z)\bigg|_{z=0}  \overset{k\to\infty}{\longrightarrow}\frac{d^2}{dz^2}E(z)\bigg|_{z=0},
\end{align*}
and the (O-V) conditions for the $\gamma_1$ and $\delta$ parameters, we obtain $\mathsf{c}_1=-\gamma_1$ and $\mathsf{c}_2=-\frac{\gamma_2}{2}$. This gives $E(z)=\mathsf{E}_\omega(z)$ and completes the proof of the only if direction.

The if direction is easy. The convergence of the $\alpha_i^{\pm,(N)}\left(\mathbf{x}^{(N)}\right)$ follows by looking at the roots $\frac{1}{\alpha_i^+},-\frac{1}{\alpha_i^-}$ of $\mathsf{E}_\omega$ and arguing using Hurwitz's theorem as before. Moreover, from:
\begin{align*}
  \frac{d}{dz}\Psi_N(z)\bigg|_{z=0}  \overset{N\to\infty}{\longrightarrow}\frac{d}{dz}\mathsf{E}_\omega(z)\bigg|_{z=0}, \ \ \frac{d^2}{dz^2}\Psi_N(z)\bigg|_{z=0}  \overset{N\to\infty}{\longrightarrow}\frac{d^2}{dz^2}\mathsf{E}_\omega(z)\bigg|_{z=0},
\end{align*}
we get that there exist real (finite) $\gamma_1$ and $\delta\ge 0$ such that:
\begin{align*}
  \gamma^{(N)}_1\left(\mathbf{x}^{(N)}\right)\overset{N\to\infty}{\longrightarrow}  \gamma_1, \ \ \delta^{(N)}\left(\mathbf{x}^{(N)}\right)\overset{N\to\infty}{\longrightarrow} \delta
\end{align*}
and then define by $\gamma_2$ by (\ref{Gamma2}).
\end{proof}

\begin{proof}[Proof of Proposition \ref{PropositionExtension}]
Observe that, we can write $\mathsf{\Phi}_N(z)$ as follows:
\begin{equation*}
\mathsf{\Phi}_N(z)=e^{\left(\mathsf{c}^{(N)}-\sum_{i=1}^Nx_i^{(N)}\right)z}\prod_{i=1}^N\left(1-x_i^{(N)}\right)e^{x_i^{(N)}z}.
\end{equation*}
Since by assumption we have:
\begin{equation*}
  \frac{d}{dz}\mathsf{\Phi}_N(z)\bigg|_{z=0}  \overset{N\to\infty}{\longrightarrow}\frac{d}{dz}\mathsf{E}(z)\bigg|_{z=0}<\infty, \ \ \frac{d^2}{dz^2}\mathsf{\Phi}_N(z)\bigg|_{z=0}  \overset{N\to\infty}{\longrightarrow}\frac{d^2}{dz^2}\mathsf{E}(z)\bigg|_{z=0}<\infty,
\end{equation*}
we obtain that there exists a finite constant $K$ such that:
\begin{equation}\label{ConstantBoundedness}
  \sup_{N\ge 1} \left|\mathsf{c}^{(N)}-\sum_{i=1}^Nx_i^{(N)}\right| \le K, \ \ \sup_{N\ge 1} \sum_{i=1}^N \left(x_i^{(N)}\right)^2\le K.
\end{equation}
These sequences converge but we shall not need this. Hence, again using the inequality $\left|\left(1-z\right)e^{z} \right|\le e^{4|z|^2}$, we can bound for all $z\in \mathbb{C}$:
\begin{equation}\label{BoundExtension}
 \left|\mathsf{\Phi}_N(z)\right| \le e^{\left|\mathsf{c}^{(N)}-\sum_{i=1}^Nx_i^{(N)}\right||z|}e^{4\sum_{i=1}^N\left(x_i^{(N)}\right)^2|z|^2} \le e^{K(|z|+4|z|^2)} .
\end{equation}
Thus, the sequence $\left\{\mathsf{\Phi}_N(z)\right\}_{N=1}^\infty$is uniformly bounded on any compact set in $\mathbb{C}$. By Montel's theorem \cite{Ahlfors} it is normal. All entire functions possibly arising as subsequential limits must coincide with $\mathsf{E}$ in a real neighbourhood of the origin and thus by the identity theorem \cite{Ahlfors} they must equal $\mathsf{E}$ on all of $\mathbb{C}$. This prove the convergence statement.

To see that $\mathsf{E}$ belongs in $\mathcal{LP}$ observe the following: $\mathsf{E}(0)=1$, from (\ref{BoundExtension}) $\mathsf{E}$ is of order at most $2$ and finally by Hurwitz's theorem \cite{Ahlfors} its zeros (which are all real) arise as limits of the zeros of $\mathsf{\Phi}_N$ (namely the reciprocals of the $x_i^{(N)}$'s). Then, Hadamard's factorisation theorem \cite{Ahlfors} gives the required form of an $\mathcal{LP}$ function (the sum of squares being finite follows from (\ref{ConstantBoundedness}) and Fatou's lemma).
\end{proof}

We have the following quantitative version of the result stated in Remark \ref{RemarkExtension}, which itself generalises the statement of Proposition \ref{ConvergenceProp}. In plain words, if one has quantitative control on how close $\omega$ is to $\tilde{\omega}$ in $\hat{\Omega}$ then we have uniform control on how close $\mathsf{E}_{\omega}$ is to $\mathsf{E}_{\tilde{\omega}}$. Although we did not make probabilistic use of it, it is plausible that it will have applications in the future. The fact that the bound is compatible with coordinate-wise convergence in $\hat{\Omega}$ (namely it goes to zero as $\omega$ converges to $\tilde{\omega}$ coordinate-wise in $\hat{\Omega}$) is the content of Lemma \ref{LemmaTop} below, see also Remark \ref{CommentsQuantitative} for more comments.

\begin{prop}\label{PropositionQuantitative}
Suppose $\omega=\left(\alpha^+,\alpha^-,\gamma_1,\delta\right), \tilde{\omega}=\left(\tilde{\alpha}^+,\tilde{\alpha}^-,\tilde{\gamma}_1,\tilde{\delta}\right) \in \hat{\Omega}$. Then, there exists some absolute constant $L$ such that, for all $z\in \mathbb{C}$:
\begin{align}\label{QuantitativeBound}
 \left|\mathsf{E}_{\omega}(z)-\mathsf{E}_{\tilde{\omega}}(z)\right|&\le e^{|\gamma_1||z|+5\delta|z|^2}\left(e^{\left|\gamma_1-\tilde{\gamma}_1\right||z|+\frac{|\delta-\tilde{\delta}|}{2}|z|^2}-1\right) \nonumber \\
 &+|z|\left(\sum_{i=1}^\infty\left[\left|\alpha_i^+-\tilde{\alpha}_i^+\right|^3+\left|\alpha_i^- -\tilde{\alpha}_i^-\right|^3\right]\right)^{\frac{1}{3}}e^{|\tilde{\gamma}_1||z|+\frac{\tilde{\delta}}{2}|z|^2+L\left(|z|\left(\delta^{\frac{1}{2}}+\tilde{\delta}^{\frac{1}{2}}\right)+1\right)^{3}}.
\end{align}
\end{prop}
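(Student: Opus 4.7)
The plan is to rewrite the $\mathcal{LP}$ function via the Weierstrass primary factor of genus two, $\mathsf{E}_2(u)=(1-u)e^{u+u^2/2}$, as
\begin{equation*}
\mathsf{E}_\omega(z)=e^{-\gamma_1 z-\frac{\delta}{2}z^2}\mathsf{P}_\omega(z),\qquad \mathsf{P}_\omega(z)\overset{\textnormal{def}}{=}\prod_{i=1}^{\infty}\mathsf{E}_2(z\alpha_i^+)\prod_{i=1}^\infty \mathsf{E}_2(-z\alpha_i^-);
\end{equation*}
the two forms agree because the quadratic corrections $\tfrac{z^2}{2}\big(\sum(\alpha_i^+)^2+\sum(\alpha_i^-)^2\big)$ produced by the $\mathsf{E}_2$-factors combine with $e^{-\gamma_2 z^2/2}$ to give $e^{-\delta z^2/2}$. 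From the bound $|(1-u)e^u|\le e^{4|u|^2}$ already used in the proof of Proposition \ref{ConvergenceProp} one obtains $|\mathsf{E}_2(u)|\le e^{9|u|^2/2}$, and hence $|\mathsf{P}_\omega(z)|\le e^{9\delta|z|^2/2}$.

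Then split
\begin{equation*}
\mathsf{E}_\omega(z)-\mathsf{E}_{\tilde\omega}(z)=\big[e^{-\gamma_1 z-\frac{\delta}{2}z^2}-e^{-\tilde\gamma_1 z-\frac{\tilde\delta}{2}z^2}\big]\mathsf{P}_\omega(z)+e^{-\tilde\gamma_1 z-\frac{\tilde\delta}{2}z^2}\big[\mathsf{P}_\omega(z)-\mathsf{P}_{\tilde\omega}(z)\big].
\end{equation*}
For the first summand the elementary inequality $|e^A-e^B|\le e^{\Re A}(e^{|A-B|}-1)$, combined with the bound on $|\mathsf{P}_\omega|$, reproduces the first term of (\ref{QuantitativeBound}), with the constant $5=\tfrac{1}{2}+\tfrac{9}{2}$ in front of $\delta|z|^2$ appearing as the sum of the two exponent contributions. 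For the second summand the scalar prefactor is $\le e^{|\tilde\gamma_1||z|+\tilde\delta|z|^2/2}$, so the real work is controlling $|\mathsf{P}_\omega(z)-\mathsf{P}_{\tilde\omega}(z)|$. I would first split off positive and negative parts through $XY-\tilde X\tilde Y=(X-\tilde X)Y+\tilde X(Y-\tilde Y)$, reducing to a difference of the type $\prod_i\mathsf{E}_2(z\alpha_i^+)-\prod_i\mathsf{E}_2(z\tilde\alpha_i^+)$ (and symmetrically for $\alpha^-$).

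Expand this via the standard telescoping identity $\prod a_i-\prod\tilde a_i=\sum_i\prod_{j<i}a_j\cdot(a_i-\tilde a_i)\cdot\prod_{j>i}\tilde a_j$, bounding the two partial products by $e^{9\delta|z|^2/2}$ and $e^{9\tilde\delta|z|^2/2}$. For each individual factor, the explicit formula $\mathsf{E}_2'(w)=-w^2 e^{w+w^2/2}$ combined with integration along the segment from $z\tilde\alpha_i^+$ to $z\alpha_i^+$ yields
\begin{equation*}
|\mathsf{E}_2(z\alpha_i^+)-\mathsf{E}_2(z\tilde\alpha_i^+)|\le |z|\,|\alpha_i^+-\tilde\alpha_i^+|\cdot|z|^2 c_i^2\cdot e^{|z|c_i+|z|^2c_i^2/2},\quad c_i\overset{\textnormal{def}}{=}\max(\alpha_i^+,\tilde\alpha_i^+).
\end{equation*}
Since $\alpha_1^+\le\sqrt\delta$ and $\tilde\alpha_1^+\le\sqrt{\tilde\delta}$, all $c_i$ are bounded by $M\overset{\textnormal{def}}{=}\sqrt\delta+\sqrt{\tilde\delta}$, so the last exponential is uniformly $\le e^{|z|M+|z|^2M^2/2}$. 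Summing and applying H\"older with exponents $3$ and $3/2$ gives $\sum_i |\alpha_i^+-\tilde\alpha_i^+| c_i^2\le\big(\sum_i|\alpha_i^+-\tilde\alpha_i^+|^3\big)^{1/3}\big(\sum_i c_i^3\big)^{2/3}$, and the last factor is controlled by $\sum c_i^3\le 4(\delta^{3/2}+\tilde\delta^{3/2})\le 4M^3$, using $\sum(\alpha_i^+)^3\le(\max\alpha_i^+)\sum(\alpha_i^+)^2\le\delta^{3/2}$. The resulting estimate is an overall $|z|^3M^2$ times $\big(\sum|\alpha_i^+-\tilde\alpha_i^+|^3\big)^{1/3}$ times an exponential with exponent containing pieces $\tfrac{9}{2}(\delta+\tilde\delta)|z|^2$ and $|z|M+\tfrac{1}{2}|z|^2M^2$. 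All these quantities are $O((|z|M+1)^3)$, via the crude bounds $|z|^2M^2\le(|z|M+1)^3$ and $\delta+\tilde\delta\le M^2$, and collapse into a single $e^{L(|z|M+1)^3}$ with absolute $L$; the leftover $|z|^3M^2=|z|\cdot|z|^2M^2$ is likewise absorbed into $|z|\cdot e^{(|z|M+1)^3}$, producing the second summand of (\ref{QuantitativeBound}). The main obstacle is precisely this final bookkeeping step, consolidating all leftover polynomial and exponential factors into the exact form $|z|\cdot e^{L(|z|(\sqrt\delta+\sqrt{\tilde\delta})+1)^3}$ with a single absolute constant.
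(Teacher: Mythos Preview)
Your proof is correct and shares the paper's overall architecture: the same factorisation $\mathsf{E}_\omega(z)=e^{-\gamma_1 z-\frac{\delta}{2}z^2}\mathsf{P}_\omega(z)$ (the paper writes $\mathsf{f}_\omega$ for your $\mathsf{P}_\omega$), the same triangle inequality splitting, and the same treatment of the scalar exponential piece yielding the first line of (\ref{QuantitativeBound}) with the constant $5=\tfrac12+\tfrac92$.

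Where you diverge is in the bound for $|\mathsf{P}_\omega(z)-\mathsf{P}_{\tilde\omega}(z)|$. The paper observes that $\mathsf{P}_\omega(z)$ is precisely the order-$3$ regularised Fredholm determinant $\det_3(\mathbf{I}-z\mathfrak{A}_{(\alpha^+,\alpha^-)})$ of the diagonal operator with eigenvalues $\alpha_i^+,-\alpha_i^-$, and then invokes a ready-made Lipschitz-type inequality for such determinants (Theorem~6.5 in Simon's \emph{Trace Ideals}) to get the $\ell^3$ difference term and the cubic exponent in one stroke. You instead prove this inequality by hand: telescope the product, bound each increment via $\mathsf{E}_2'(w)=-w^2e^{w+w^2/2}$ on the real segment, apply H\"older with exponents $3,\tfrac32$, and use $\sum c_i^3\le \delta^{3/2}+\tilde\delta^{3/2}$ to control the dual norm. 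Your route is more elementary and self-contained (no operator-theoretic machinery), at the cost of some bookkeeping to collapse the polynomial prefactors and quadratic exponents into a single $e^{L(|z|(\sqrt\delta+\sqrt{\tilde\delta})+1)^3}$; the paper's route is shorter but imports a nontrivial external result. Both land on the same bound.
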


\begin{rmk}
Clearly an analogous bound to (\ref{QuantitativeBound}) holds with the role of $\omega$ and $\tilde{\omega}$ on the right hand side swapped.
\end{rmk}

\begin{lem}\label{LemmaTop}
Assume $\hat{\Omega} \ni\omega_N=(\alpha^{+,(N)},\alpha^{-,(N)},\gamma_1^{(N)},\delta^{(N)}) \overset{N\to \infty}{\longrightarrow}\omega=(\alpha^+,\alpha^-,\gamma_1,\delta)\in \hat{\Omega}$ in the topology of coordinate-wise convergence. Then, we have:
\begin{align*}
 \sum_{i=1}^\infty\left|\alpha_i^{+,(N)}-\alpha_i^+\right|^3\overset{N\to \infty}{\longrightarrow}0, \ \ \sum_{i=1}^\infty\left|\alpha_i^{-,(N)}-\alpha_i^-\right|^3\overset{N\to \infty}{\longrightarrow}0.   
\end{align*}
\end{lem}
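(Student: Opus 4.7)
\textbf{Proof plan for Lemma \ref{LemmaTop}.} By symmetry it suffices to prove the statement for the $\alpha^+$ parameters. The idea is the standard head-versus-tail split: for each $\varepsilon>0$, choose a cutoff $I$ so that the (uniform) tail is $\le \varepsilon/2$, and then exploit coordinate-wise convergence to make the finite head $\le\varepsilon/2$ for all $N$ large. Throughout, the essential structural inputs are: (a) the entries of each sequence $\alpha^{+,(N)}$ and of $\alpha^+$ are non-increasing and non-negative, and (b) $\sum_{i\ge 1}(\alpha_i^{+,(N)})^2\le \delta^{(N)}$, with $\delta^{(N)}\to \delta<\infty$, so these $\ell^2$-type sums are uniformly bounded.

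For the tail, I will use the elementary inequalities $|a-b|^3\le 4(a^3+b^3)$ (valid for $a,b\ge 0$, since $(a+b)^3\le 4(a^3+b^3)$) together with the monotonicity bound $a_i^3\le a_{I+1}a_i^2$ for $i>I$ when $(a_i)$ is non-increasing. Applied to both $(\alpha_i^{+,(N)})$ and $(\alpha_i^+)$, this gives
\begin{equation*}
\sum_{i>I}\bigl|\alpha_i^{+,(N)}-\alpha_i^+\bigr|^3
\;\le\; 4\alpha_{I+1}^{+,(N)}\sum_{i>I}\bigl(\alpha_i^{+,(N)}\bigr)^2+4\alpha_{I+1}^+\sum_{i>I}(\alpha_i^+)^2
\;\le\; 4\alpha_{I+1}^{+,(N)}\,\delta^{(N)}+4\alpha_{I+1}^+\,\delta.
\end{equation*}
Since $\sum_i(\alpha_i^+)^2\le\delta<\infty$ and the $\alpha_i^+$ are non-increasing, we have $\alpha_{I+1}^+\to 0$ as $I\to\infty$, so I can choose $I$ so that $4\alpha_{I+1}^+\delta<\varepsilon/4$. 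For this fixed $I$, coordinate-wise convergence gives $\alpha_{I+1}^{+,(N)}\to\alpha_{I+1}^+$ and $\delta^{(N)}\to\delta$, so $4\alpha_{I+1}^{+,(N)}\delta^{(N)}\to 4\alpha_{I+1}^+\delta<\varepsilon/4$, which is therefore $\le\varepsilon/4$ for all $N$ sufficiently large. Hence the tail is eventually $\le\varepsilon/2$.

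For the head, $\sum_{i=1}^{I}|\alpha_i^{+,(N)}-\alpha_i^+|^3$ is a finite sum each of whose terms tends to $0$ by coordinate-wise convergence, hence the head is $\le \varepsilon/2$ for all $N$ large. Summing the two bounds yields $\sum_{i=1}^\infty|\alpha_i^{+,(N)}-\alpha_i^+|^3<\varepsilon$ for $N\ge N_0$, which is the claim. The only point that genuinely requires attention---and the step I view as the crux---is the uniform-in-$N$ control of the tail; this is handled precisely by combining the $\ell^2$-type bound $\sum_i(\alpha_i^{+,(N)})^2\le\delta^{(N)}$ (which is uniform because $\delta^{(N)}$ converges) with the monotonicity, which upgrades the cubic tail to a multiple of the leading excluded entry $\alpha_{I+1}^{+,(N)}$, an individual coordinate whose behaviour is controlled by hypothesis.
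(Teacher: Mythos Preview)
Your argument is correct. The key tail estimate---bounding $\sum_{i>I}(\alpha_i^{+,(N)})^3$ by $\alpha_{I+1}^{+,(N)}\cdot\sum_i(\alpha_i^{+,(N)})^2\le \alpha_{I+1}^{+,(N)}\delta^{(N)}$ using monotonicity, and then controlling $\alpha_{I+1}^{+,(N)}$ via coordinate-wise convergence---is exactly the estimate the paper uses as well.

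The organization differs, however. The paper first proves the intermediate fact that the $\ell^3$-norms converge, i.e.\ $\sum_i(\alpha_i^{+,(N)})^3\to\sum_i(\alpha_i^+)^3$, using the same head-tail splitting, and then deduces the lemma by a Brezis--Lieb/Scheff\'e-type step: apply Fatou's lemma to the nonnegative sequence $8\big[(\alpha_i^{+,(N)})^3+(\alpha_i^+)^3\big]-|\alpha_i^{+,(N)}-\alpha_i^+|^3$ to force $\limsup_N\sum_i|\alpha_i^{+,(N)}-\alpha_i^+|^3\le 0$. Your route is more direct: you apply the head-tail split to the differences themselves via $|a-b|^3\le 4(a^3+b^3)$, avoiding both the separate norm-convergence statement and the Fatou step. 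The paper's detour has the mild advantage of isolating $\ell^3$-norm convergence as a standalone fact, but your argument is shorter and entirely elementary.
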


\begin{proof}[Proof of Proposition \ref{PropositionQuantitative}]
We can write $\mathsf{E}_\omega(z)=\mathsf{f}_{\omega}(z)\mathsf{g}_\omega(z)$, where:
\begin{align*}
\mathsf{f}_{\omega}(z)&=\prod_{i=1}^\infty\left(1-z\alpha_i^+\right)e^{z\alpha_i^++\frac{z^2}{2}\left(\alpha_i^+\right)^2} \prod_{i=1}^\infty\left(1+z\alpha_i^-\right)e^{-z\alpha_i^-+\frac{z^2}{2}\left(\alpha_i^-\right)^2},\\
\mathsf{g}_{\omega}(z)&=e^{-\gamma_1 z-\frac{\delta}{2}z^2}.
\end{align*}
Using the triangle inequality we get:
\begin{equation*}
  \left|\mathsf{E}_{\omega}(z)-\mathsf{E}_{\tilde{\omega}}(z)\right|  \le \left|\mathsf{f}_\omega(z)\right|\left|\mathsf{g}_\omega(z)-\mathsf{g}_{\tilde{\omega}}(z)\right|+\left|\mathsf{g}_{\tilde{\omega}}(z)\right|\left|\mathsf{f}_\omega(z)-\mathsf{f}_{\tilde{\omega}}(z)\right|.
\end{equation*}
Moreover, using the inequalities $\left|\left(1-z\right)e^{z} \right|\le e^{4|z|^2}$ and $\left|e^{z}-1\right|\le e^{|z|}-1$ we obtain:
\begin{align}
\left|\mathsf{f}_\omega(z)\right|   &\le e^{\frac{9}{2}|z|^2\left(\sum_{i=1}^\infty\left(\alpha_i^+\right)^2+\sum_{i=1}^\infty\left(\alpha_i^-\right)^2\right)} \le e^{\frac{9\delta}{2}|z|^2},\label{Bound1}\\
\left|\mathsf{g}_{\tilde{\omega}}(z)\right| &\le e^{|\tilde{\gamma}_1||z|+\frac{\tilde{\delta}}{2}|z|^2} \label{Bound2} ,\\
\left|\mathsf{g}_{\omega}(z)-\mathsf{g}_{\tilde{\omega}}(z)\right|&\le e^{|\gamma_1||z|+\frac{\delta}{2}|z|^2}\left|e^{\left(\gamma_1-\tilde{\gamma}_1\right)z+\frac{\delta-\tilde{\delta}}{2}z^2}-1\right|\le e^{|\gamma_1||z|+\frac{\delta}{2}|z|^2}\left(e^{\left|\gamma_1-\tilde{\gamma}_1\right||z|+\frac{|\delta-\tilde{\delta}|}{2}|z|^2}-1\right)\label{Bound3}.
\end{align}
Now, we observe that $\mathsf{f}_\omega(z)$ is simply the regularised (of order 3) determinant  $\det_3\left(\mathbf{I}-z\mathfrak{A}\right)$ for an operator $\mathfrak{A}$ with eigenvalues $\alpha_i^+,-\alpha_i^-$, see Theorem 6.2 in \cite{SimonFredholm}. Such an operator that we denote by $\mathfrak{A}_{(\alpha^+,\alpha^-)}$ to emphasise dependence on $\alpha^+,\alpha^-$, can be concretely defined as follows.  On an infinite-dimensional separable Hilbert space $\mathfrak{H}$ with orthonormal basis $\{\mathsf{e}_i^+,\mathsf{e}_i^-\}_{i=1}^\infty$ we define $\mathfrak{A}_{(\alpha^+,\alpha^-)}:\mathfrak{H} \to \mathfrak{H}$ by $\mathfrak{A}_{(\alpha^+,\alpha^-)} \mathsf{e}_i^+=\alpha_i^+\mathsf{e}_i^+$ and $\mathfrak{A}_{(\alpha^+,\alpha^-)} \mathsf{e}_i^-=-\alpha_i^-\mathsf{e}_i^-$ for all $i\ge 1$. Then, we may apply Theorem 6.5 in \cite{SimonFredholm} to obtain:
\begin{align}
 \left|\mathsf{f}_\omega(z)-\mathsf{f}_{\tilde{\omega}}(z)\right|&=\left|\textnormal{det}_3\left(\mathbf{I}-z\mathfrak{A}_{(\alpha^+,\alpha^-)}\right)-\textnormal{det}_3\left(\mathbf{I}-z\mathfrak{A}_{(\tilde{\alpha}^+,\tilde{\alpha}^-)}\right)\right|\nonumber\\ 
 &\le  |z|\left(\sum_{i=1}^\infty\left[\left|\alpha_i^+-\tilde{\alpha}_i^+\right|^3+\left|\alpha_i^- -\tilde{\alpha}_i^-\right|^3\right]\right)^{\frac{1}{3}}\times \nonumber\\
 &\times \exp\left(L\left[|z|\left( \sum_{i=1}^\infty\left[\left(\alpha_i^+\right)^3+\left(\alpha_i^-\right)^3\right]\right)^{\frac{1}{3}}+|z|\left( \sum_{i=1}^\infty\left[\left(\tilde{\alpha}_i^+\right)^3+\left(\tilde{\alpha}_i^-\right)^3\right]\right)^{\frac{1}{3}}+1\right]^3\right) \nonumber\\
 &\le|z|\left(\sum_{i=1}^\infty\left[\left|\alpha_i^+-\tilde{\alpha}_i^+\right|^3+\left|\alpha_i^- -\tilde{\alpha}_i^-\right|^3\right]\right)^{\frac{1}{3}}\exp\left(L\left(|z|\left(\delta^{\frac{1}{2}}+\tilde{\delta}^{\frac{1}{2}}\right)+1\right)^{3}\right), \label{Bound4}
\end{align}
for some constant $L$. In applying Theorem 6.5 of \cite{SimonFredholm} one needs to compute the singular values of the operator $z(\mathfrak{A}_{(\alpha^+,\alpha^-)}-\mathfrak{A}_{(\tilde{\alpha}^+,\tilde{\alpha}^-)})$ (similarly of $z\mathfrak{A}_{(\alpha^+,\alpha^-)}$ and $z\mathfrak{A}_{(\tilde{\alpha}^+,\tilde{\alpha}^-)}$) and we have used the fact that $\mathfrak{A}_{(\alpha^+,\alpha^-)}-\mathfrak{A}_{(\tilde{\alpha}^+,\tilde{\alpha}^-)}$ (similarly  $\mathfrak{A}_{(\alpha^+,\alpha^-)}$ and $\mathfrak{A}_{(\tilde{\alpha}^+,\tilde{\alpha}^-)}$) is self-adjoint. Moreover, the absolute constant $L$ can be picked such that $L\le e\left(2+\log 3\right)$, see the discussion before Theorem 6.4 in \cite{SimonFredholm}. Putting (\ref{Bound1}), (\ref{Bound2}), (\ref{Bound3}) and (\ref{Bound4}) together gives the bound in the statement of the proposition.
\end{proof}

\begin{rmk}\label{CommentsQuantitative}In the proof above we could have used instead the factorisation $\mathsf{E}_\omega(z)=\mathsf{f}_\omega^{\star}(z)\mathsf{g}_\omega^{\star}(z)$ where:
\begin{align*}
   \mathsf{f}_\omega^{\star}(z)&=\prod_{i=1}^\infty\left(1-z\alpha_i^+\right)e^{z\alpha_i^+} \prod_{i=1}^\infty\left(1+z\alpha_i^-\right)e^{-z\alpha_i^-} ,\\
  \mathsf{g}_\omega^{\star}(z)&=e^{-\gamma_1z-\frac{1}{2}\left[\delta-\sum_{i=1}^\infty (\alpha_i^+)^2+\sum_{i=1}^\infty (\alpha_i^-)^2\right]z^2}.
\end{align*}
Then, $\mathsf{f}_\omega^{\star}(z)$ is given by the standard Hilbert-Schmidt regularised determinant $\det_2\left(\mathbf{I}-z\mathfrak{A}_{(\alpha^+,\alpha^-)}\right)$. Moreover, using standard inequalities for reguralised determinants, see Theorem 6.5 in \cite{SimonFredholm}, we obtain an overall better\footnote{In that there is no $e^{C|z|^3}$ factor.} bound than (\ref{Bound4}) but which involves the factor:
\begin{equation*}
   \left(\sum_{i=1}^\infty\left[\left|\alpha_i^+-\tilde{\alpha}_i^+\right|^2+\left|\alpha_i^- -\tilde{\alpha}_i^-\right|^2\right]\right)^{\frac{1}{2}}.
\end{equation*}
Note that, this is no longer compatible with coordinate-wise convergence in $\hat{\Omega}$ as such convergence only implies weak convergence in the sequence space $\ell^2$ for the $\alpha^+$ and $\alpha^-$ parameters (viewed as elements of $\ell^2$). On the other hand, as we prove in Lemma \ref{LemmaTop}, it implies norm convergence in $\ell^3$.
\end{rmk}

\begin{rmk}It is also possible to prove a higher order result of this type. For any $r\ge 2$, we consider the infinite-dimensional space $\hat{\Omega}^{(r)}$ (observe that $\hat{\Omega}^{(2)}\equiv \hat{\Omega}$):
\begin{align*}
\hat{\Omega}^{(r)}&= \bigg\{\omega=\left(\alpha^+,\alpha^-,\delta_1,\delta_2,\dots,\delta_r\right)\in \mathbb{R}_+^\infty\times \mathbb{R}_+^\infty \times \mathbb{R}\times \mathbb{R} \times \cdots \times \mathbb{R}_+:\\
&\alpha^+=(\alpha_1^+\ge \alpha_2^+\ge \cdots \ge 0);\alpha^-=(\alpha_1^-\ge \alpha_2^-\ge \cdots \ge 0);\sum_{i=1}^\infty\left(\alpha_i^+\right)^r +\sum_{i=1}^\infty\left(\alpha_i^-\right)^r\le \delta_r \bigg\}.
\end{align*}
Endow $\hat{\Omega}^{(r)}$ with the topology of coordinate-wise convergence. Furthermore, consider the class of functions, parametrised by $\omega\in \hat{\Omega}^{(r)}$, and given by:
\begin{equation*}
 \mathfrak{E}_\omega(z)= e^{-\sum_{j=1}^r \frac{\delta_jz^j}{j}}   \prod_{i=1}^\infty\left(1-z\alpha_i^+\right)e^{\sum_{j=1}^r\frac{(\alpha_i^+)^jz^j}{j}} \prod_{i=1}^\infty\left(1+z\alpha_i^-\right)e^{\sum_{j=1}^r\frac{(-\alpha_i^+)^jz^j}{j}}.
\end{equation*}
Write $\mathfrak{E}_\omega(z)=\mathfrak{g}_\omega(z)\mathfrak{f}_\omega(z)$, where:
\begin{equation*}
  \mathfrak{f}_\omega(z) = \prod_{i=1}^\infty\left(1-z\alpha_i^+\right)e^{\sum_{j=1}^r\frac{(\alpha_i^+)^jz^j}{j}} \prod_{i=1}^\infty\left(1+z\alpha_i^-\right)e^{\sum_{j=1}^r\frac{(-\alpha_i^+)^jz^j}{j}}, \ \ \mathfrak{g}_\omega(z)= e^{-\sum_{j=1}^r \frac{\delta_jz^j}{j}}  .
\end{equation*}
We observe that, $\mathfrak{f}_\omega(z)$ is also a regularised determinant: $\mathfrak{f}_\omega(z)=\det_{r+1}\left(\mathbf{I}-z\mathfrak{A}_{(\alpha^+,\alpha^-)}\right)$, see Theorem 6.2 in \cite{SimonFredholm}. Then, by adapting the proof of Proposition \ref{PropositionQuantitative}, using the standard inequality:
\begin{equation*}
 \left|(1-z)e^{\sum_{j=1}^{r-1} \frac{z^j}{j}}\right|\le e^{C_r |z|^r}, \ \ \forall z\in \mathbb{C}, 
\end{equation*}
for some universal constant $C_r$ and the standard inequalities for the regularised determinant $\det_{r+1}$, see Theorem 6.5 in \cite{SimonFredholm}, we obtain an analogous uniform bound to (\ref{QuantitativeBound}) for $\left|\mathfrak{E}_\omega(z)-\mathfrak{E}_{\tilde{\omega}}(z)\right|$. Finally, Lemma \ref{LemmaTop} also has a suitable generalisation. At present, we do not have any probabilistic applications in mind for this more general setup but it might be useful in the future.
\end{rmk}

\begin{proof}[Proof of Lemma \ref{LemmaTop}]
We only prove the result for the $\alpha^+$ parameters. The proof for the $\alpha^-$ parameters is completely analogous. We assume, and we will prove shortly,  that:
\begin{equation}\label{NormConvergence}
 \sum_{i=1}^\infty\left(\alpha_i^{+,(N)}\right)^3\overset{N\to \infty}{\longrightarrow} \sum_{i=1}^\infty\left(\alpha_i^{+}\right)^3.   
\end{equation}
Moreover, we observe: 
\begin{equation*}
 2^3\left[\left(\alpha_i^{+,(N)}\right)^3+\left(\alpha_i^{+}\right)^3\right]- \left|\alpha_i^{+,(N)}-\alpha_i^+\right|^3 \ge 0, \ \ \forall i\ge 1, N\ge 1.
\end{equation*}
Then, using Fatou's lemma and the claim (\ref{NormConvergence}) above we obtain the desired conclusion. 

Now, in order to prove the claim we adapt the proof of Proposition 2.3 in  \cite{OrbitalBeta}. Let $r\ge 1$ be arbitrary. We can write:
\begin{align*}
  \sum_{i=1}^\infty\left(\alpha_i^{+,(N)}\right)^3&=\sum_{i=1}^r\left(\alpha_i^{+,(N)}\right)^3+\mathcal{O}\left(\alpha_{r+1}^{+,(N)}\sum_{j=1}^\infty \left(\alpha_j^{+,(N)}\right)^2\right).
\end{align*}
The upper and lower limits of this display, as $N\to \infty$, can be written as:
\begin{equation*}
      \sum_{i=1}^r\left(\alpha_i^{+}\right)^3 +\mathcal{O}\left(\sup_{N\ge 1}\delta^{(N)}\alpha_{r+1}^+\right) .
\end{equation*}
On the other hand, since:
\begin{equation*}
      \sum_{i=r+1}^\infty\left(\alpha_i^{+}\right)^3=\mathcal{O}\left(\alpha_{r+1}^+\sum_{i=1}^\infty \left(\alpha_i^+\right)^2\right),
\end{equation*}
we deduce that the upper and lower limits of $ \sum_{i=1}^\infty\left(\alpha_i^{+,(N)}\right)^3 $ are both equal to:
\begin{equation*}
   \sum_{i=1}^\infty\left(\alpha_i^{+}\right)^3 +\mathcal{O}\left(\sup_{N\ge 1}\delta^{(N)}\alpha_{r+1}^+\right) .
\end{equation*}
Letting $r\to \infty$ gives (\ref{NormConvergence}).
\end{proof}

Write $\mathfrak{T}:\Omega \to \mathcal{LP}$ for the map $\omega \mapsto \mathsf{E}_\omega(z)$ induced by Definition \ref{DefLP}. We have the following result.

\begin{prop}\label{MeasureProp}
The pushforward map:
\begin{align*}
  \mathfrak{T}_*:\mathcal{M}_p\left(\Omega\right) &\to \mathcal{M}_p\left(\mathcal{LP}\right)  \\
 \nu(\cdot) &\mapsto \nu\left(\mathfrak{T}^{-1}\left(\cdot\right)\right),
\end{align*}
is well-defined and is in fact a bijection.
\end{prop}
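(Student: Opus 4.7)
The plan is to show that $\mathfrak{T}$ is a homeomorphism between $\Omega$ and $\mathcal{LP}$; the bijectivity of the induced pushforward map on Borel probability measures is then automatic. Surjectivity of $\mathfrak{T}$ is built into Definition \ref{DefLP}, while injectivity follows from the uniqueness of the Hadamard factorisation of an entire function of order at most $2$ with only real zeros (the $\alpha_i^\pm$ are the reciprocals of the zeros, ordered, and $\gamma_1,\gamma_2$ are determined by the low-order Taylor coefficients at $0$). Continuity of $\mathfrak{T}$, i.e.\ $\omega_N \to \omega$ in $\Omega$ implies $\mathsf{E}_{\omega_N} \to \mathsf{E}_\omega$ uniformly on compact sets, is precisely the statement of Remark \ref{RemarkExtension}.

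The substantive step is continuity of $\mathfrak{T}^{-1}$. Suppose $\mathsf{E}_{\omega_N} \to \mathsf{E}_\omega$ uniformly on compacts. Uniform convergence on compacts implies convergence of derivatives at $0$, and logarithmic differentiation of the factorised form of $\mathsf{E}_{\omega_N}$ yields
\begin{equation*}
\gamma_1^{(N)} = -\mathsf{E}_{\omega_N}'(0),\qquad \delta^{(N)} = \bigl(\gamma_1^{(N)}\bigr)^2 - \mathsf{E}_{\omega_N}''(0).
\end{equation*}
Hence $\gamma_1^{(N)} \to \gamma_1$ and $\delta^{(N)} \to \delta$, so in particular both sequences are bounded. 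Combined with $\sum_i(\alpha_i^{\pm,(N)})^2 \le \delta^{(N)}$, this makes every $\alpha_i^{\pm,(N)}$ uniformly bounded in $N$.

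I now argue via subsequences. From any subsequence extract, by a diagonal argument, a further subsubsequence along which $\alpha_i^{\pm,(N_k)} \to \beta_i^\pm$ for every $i\ge 1$. The limits are non-negative and non-increasing, and Fatou's lemma gives $\sum(\beta_i^+)^2 + \sum(\beta_i^-)^2 \le \delta$, so $\tilde\omega := (\beta^+,\beta^-,\gamma_1,\delta) \in \hat\Omega$. Applying Remark \ref{RemarkExtension} to the subsubsequence shows $\mathsf{E}_{\omega_{N_k}} \to \mathsf{E}_{\tilde\omega}$ uniformly on compacts; by uniqueness of limits $\mathsf{E}_{\tilde\omega} = \mathsf{E}_\omega$, and by injectivity of $\mathfrak{T}$ we conclude $\tilde\omega = \omega$. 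Since $\hat\Omega$ with coordinate-wise convergence is metrizable, the fact that every subsequence has a subsubsequence converging to $\omega$ forces $\omega_N \to \omega$ in $\hat\Omega$, i.e.\ in $\Omega$.

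Therefore $\mathfrak{T}$ is a homeomorphism, hence a Borel isomorphism, and $\mathfrak{T}_*$ is a bijection $\mathcal{M}_p(\Omega) \to \mathcal{M}_p(\mathcal{LP})$ with inverse $(\mathfrak{T}^{-1})_*$. The main obstacle is the compactness/extraction step for $\mathfrak{T}^{-1}$: one must verify that the coordinate-wise subsequential limit of the parameters indeed lies in $\hat\Omega$ (which is where Fatou enters), and then identify it with $\omega$ through injectivity of the Laguerre–Pólya parametrisation rather than try to push through a direct application of Hurwitz's theorem (which is awkward when $\mathsf{E}_\omega$ has only finitely many positive or negative zeros).
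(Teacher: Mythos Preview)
Your proof is correct and follows the same overall architecture as the paper: show $\mathfrak{T}$ is a homeomorphism and deduce that $\mathfrak{T}_*$ is a bijection. The bijectivity of $\mathfrak{T}$, the continuity of $\mathfrak{T}$ via Remark~\ref{RemarkExtension}, and the extraction of $\gamma_1,\delta$ from the first two derivatives at $0$ are all handled the same way.

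The one genuine difference is in the continuity of $\mathfrak{T}^{-1}$. The paper does this by a word-for-word adaptation of the ``if'' direction of Proposition~\ref{ConvergenceProp}: given $\mathsf{E}_{\omega_N}\to\mathsf{E}_\omega$ uniformly on compacts, Hurwitz's theorem applied near each zero $1/\alpha_i^+$ or $-1/\alpha_i^-$ of $\mathsf{E}_\omega$ forces $\alpha_i^{\pm,(N)}\to\alpha_i^\pm$ directly. You instead run a compactness argument: bound the coordinates, diagonally extract a subsubsequential limit $\tilde\omega\in\hat\Omega$ (using Fatou), push forward through the already-established continuity of $\mathfrak{T}$ to get $\mathsf{E}_{\tilde\omega}=\mathsf{E}_\omega$, and invoke injectivity. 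Your route is slightly less direct but, as you note, sidesteps the bookkeeping in the Hurwitz argument when $\mathsf{E}_\omega$ has only finitely many zeros of one sign (where one must argue that the remaining $\alpha_i^{\pm,(N)}$ tend to $0$); the paper's route is shorter once that case is handled. Both are perfectly valid.
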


\begin{proof}
We prove that $\mathfrak{T}$ is a homeomorphism. First observe that it is a bijection. Moreover, since by definition the bijection between $\hat{\Omega}$ and $\Omega$ is bi-continuous it suffices to establish the claim for the induced map $\hat{\mathfrak{T}}:\hat{\Omega} \to \mathcal{LP}$. Note that, from Proposition \ref{PropositionQuantitative} and Lemma \ref{LemmaTop} the map $\hat{\mathfrak{T}}$ is continuous\footnote{Both $\hat{\Omega}$ and $\mathcal{LP}$ are first-countable so continuity is equivalent to sequential continuity.} (in fact we do not need the quantitative statement, the result stated in Remark \ref{RemarkExtension} suffices). On the other hand, $\hat{\mathfrak{T}}^{-1}$ is also continuous which follows from word for word adaptation of the proof of the if direction of Proposition \ref{ConvergenceProp}. This proves the claim. Then, the statement of the proposition is a consequence of the following standard fact: suppose $\tau:\mathfrak{X} \to \mathfrak{Y}$ is a homeomorphism. Then, the pushforward map $\tau_*:\mathcal{M}_p\left(\mathfrak{X}\right) \to \mathcal{M}_p\left(\mathfrak{Y}\right)$ is a bijection.
\end{proof}

\begin{prop}\label{PrincipalValProp}
The following principal value product representation holds
\begin{equation}\label{PrincValProd}
    \mathsf{E}_\omega(z)= \lim_{R \to \infty} \prod_{\left|\alpha_i^{\pm}\right|>R^{-2}}\left(1-\alpha_i^+z\right)\left(1+\alpha_i^-z\right)
\end{equation}
if and only if
\begin{equation}\label{PrincValCond}
\gamma_1= \lim_{R \to \infty} \sum_{\left|\alpha_i^{\pm}\right|>R^{-2}}\left(\alpha_i^+-\alpha_i^-\right)  \textnormal{ and }  \gamma_2=0.
\end{equation}
\end{prop}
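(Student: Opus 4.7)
The strategy is to derive one exact algebraic identity that isolates the principal-value product from an exponential prefactor, and then read off both directions from it. For each $R>0$ set $S_R^\pm = \{i : \alpha_i^\pm > R^{-2}\}$; these index sets are finite since $\sum_i(\alpha_i^\pm)^2 < \infty$ forces $\alpha_i^\pm \to 0$. Define
$$\sigma_R = \sum_{i \in S_R^+}\alpha_i^+ - \sum_{i \in S_R^-}\alpha_i^-, \qquad Q_R(z) = \prod_{i \notin S_R^+}(1-z\alpha_i^+)e^{z\alpha_i^+}\prod_{i \notin S_R^-}(1+z\alpha_i^-)e^{-z\alpha_i^-},$$
and let $P_R(z)$ denote the finite product on the right-hand side of (\ref{PrincValProd}) before taking the limit. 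Splitting each Weierstrass product in Definition \ref{DefLP} along $S_R^\pm$ and absorbing the $e^{\pm z\alpha_i^\pm}$ factors from $i\in S_R^\pm$ into the exponential prefactor gives the exact identity
\begin{equation}\label{EqStar}
\mathsf{E}_\omega(z) = e^{(-\gamma_1 + \sigma_R)z - \frac{\gamma_2}{2}z^2}\, Q_R(z)\, P_R(z).
\end{equation}
The elementary estimate $|(1-w)e^w - 1| \le C|w|^2$ for $|w| \le 1/2$, combined with $\sum_{\alpha_i^\pm \le R^{-2}}(\alpha_i^\pm)^2 \to 0$ as $R\to\infty$, yields $Q_R \to 1$ uniformly on compact sets in $\mathbb{C}$ (a factor-by-factor product argument of the same flavor as in the proof of Proposition \ref{ConvergenceProp}).

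For the \emph{sufficiency} direction, assume $\sigma_R \to \gamma_1$ and $\gamma_2 = 0$. Then the full exponential prefactor $e^{(-\gamma_1+\sigma_R)z - (\gamma_2/2)z^2}\, Q_R(z)$ in (\ref{EqStar}) tends to $1$ uniformly on any compact $K$. In particular for $R$ large it is bounded below in modulus on $K$, and solving (\ref{EqStar}) for $P_R$ gives $P_R \to \mathsf{E}_\omega$ uniformly on $K$.

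For the \emph{necessity} direction, assume $P_R \to \mathsf{E}_\omega$ uniformly on compacts. Fix a closed disk $\bar D$ about the origin on which $\mathsf{E}_\omega$ is nonzero (possible since $\mathsf{E}_\omega(0)=1$). For $R$ large the zeros of $Q_R$ lie at reciprocals of those $\alpha_i^\pm \le R^{-2}$, hence outside $\bar D$, and $Q_R \to 1$ uniformly on $\bar D$. Then (\ref{EqStar}) rearranges to
$$e^{(-\gamma_1+\sigma_R)z - (\gamma_2/2)z^2} = \frac{\mathsf{E}_\omega(z)}{Q_R(z)\, P_R(z)} \;\longrightarrow\; 1 \qquad \text{uniformly on } \bar D.$$
Restricting to real $z$ and taking the (well-defined) real logarithm, the real polynomial $(-\gamma_1+\sigma_R)z - (\gamma_2/2)z^2$ tends uniformly to $0$ on a real interval about the origin. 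Evaluating at two distinct nonzero real points in that interval and inverting the resulting $2\times 2$ linear system in $(\sigma_R - \gamma_1, \gamma_2)$ simultaneously forces $\sigma_R \to \gamma_1$ and $\gamma_2 = 0$.

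The main technical input is the uniform convergence $Q_R \to 1$ on compact sets; once this tail estimate is in hand the rest is immediate algebra from (\ref{EqStar}). The one small subtlety, addressed above, is ensuring that the denominator $Q_R P_R$ in the necessity step is bounded away from $0$ on the chosen disk, which follows from the ``zeros go to infinity'' observation together with uniform convergence of both factors.
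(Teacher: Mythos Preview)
Your proof is correct and follows essentially the same route as the paper. Both arguments rest on the same exact identity---your (\ref{EqStar}) is precisely what the paper uses when it splits and recombines the canonical product---and both use that the tail of the Weierstrass product tends to $1$ (you phrase this as $Q_R\to 1$; the paper just invokes convergence of the full canonical product and passes to the limit in the truncated one). Your necessity step, isolating the exponential on a zero-free disk and reading off the two coefficients from the real logarithm, is exactly the paper's ``from which (\ref{PrincValCond}) follows,'' spelled out in full.
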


\begin{proof}
Assuming (\ref{PrincValCond}) we can manipulate the standard form of $\mathsf{E}_\omega$ as follows:
\begin{align*}
\mathsf{E}_\omega(z)&=e^{-\lim_{R\to \infty}\sum_{\left|\alpha_i^{\pm}\right|>R^{-2}}\left(\alpha_i^+-\alpha_i^-\right)z}\prod_{i=1}^\infty \left(1-\alpha_i^+z\right)e^{\alpha_i^+z}\left(1+\alpha_i^-z\right)e^{-\alpha_i^-z}\\
&=\lim_{R \to \infty}e^{-\sum_{\left|\alpha_i^{\pm}\right|>R^{-2}}\left(\alpha_i^+-\alpha_i^-\right)z}\prod_{i=1}^\infty \left(1-\alpha_i^+z\right)e^{\alpha_i^+z}\left(1+\alpha_i^-z\right)e^{-\alpha_i^-z}\\
&=\lim_{R \to \infty} \prod_{\left|\alpha_i^{\pm}\right|>R^{-2}}e^{-\alpha_i^+z+\alpha_i^-z} \lim_{R \to \infty}  \prod_{\left|\alpha_i^{\pm}\right|>R^{-2}}\left(1-\alpha_i^+z\right)e^{\alpha_i^+z}\left(1+\alpha_i^-z\right)e^{-\alpha_i^-z}\\
&=\lim_{R \to \infty} \prod_{\left|\alpha_i^{\pm}\right|>R^{-2}}\left(1-\alpha_i^+z\right)\left(1+\alpha_i^-z\right).
\end{align*}
Conversely assume (\ref{PrincValProd}). Then, for any $z\in \mathbb{C}\backslash \left\{\frac{1}{\alpha_i^+}\right\}\sqcup\left\{-\frac{1}{\alpha_i^-}\right\}$ we have
\begin{align*}
\lim_{R \to \infty} e^{-\sum_{\left|\alpha_i^{\pm}\right|>R^{-2}}\left(\alpha_i^+-\alpha_i^-\right)z}= \frac{\lim_{R \to \infty} \prod_{\left|\alpha_i^{\pm}\right|>R^{-2}}\left(1-\alpha_i^+z\right)\left(1+\alpha_i^-z\right)}{\lim_{R \to \infty} \prod_{\left|\alpha_i^{\pm}\right|>R^{-2}}\left(1-\alpha_i^+z\right)e^{\alpha_i^+z}\left(1+\alpha_i^-z\right)e^{-\alpha_i^-z}}= e^{-\gamma_1 z-\frac{\gamma_2}{2} z^2},
\end{align*}
from which (\ref{PrincValCond}) follows.
\end{proof}

\bibliographystyle{acm}
\bibliography{References}

\begin{thebibliography}{10}

\bibitem{Ahlfors}
{\sc Ahlfors, L.~V.}
\newblock {\em Complex analysis}, third~ed.
\newblock International Series in Pure and Applied Mathematics. McGraw-Hill
  Book Co., New York, 1978.
\newblock An introduction to the theory of analytic functions of one complex
  variable.

\bibitem{AizenmanWarzel}
{\sc Aizenman, M., and Warzel, S.}
\newblock On the ubiquity of the {C}auchy distribution in spectral problems.
\newblock {\em Probab. Theory Related Fields 163}, 1-2 (2015), 61--87.

\bibitem{ErgodicDecompositionInverseWishart}
{\sc Assiotis, T.}
\newblock Ergodic decomposition for inverse {W}ishart measures on infinite
  positive-definite matrices.
\newblock {\em SIGMA Symmetry Integrability Geom. Methods Appl. 15\/} (2019),
  Paper No. 067, 24.

\bibitem{DistinguishedFamily}
{\sc Assiotis, T., Benjamin, B., Gunes, M.~A., and Soor, A.}
\newblock On a distinguished family of random variables and {P}ainlevé
  equations.
\newblock {\em Probability and Mathematical Physics 2}, 3 (2021), 613--642.

\bibitem{CircularJacobiJoint}
{\sc Assiotis, T., Gunes, M.~A., and Soor, A.}
\newblock Convergence and an explicit formula for the joint moments of the
  circular {J}acobi {$\beta$}-ensemble characteristic polynomial.
\newblock {\em Math. Phys. Anal. Geom. 25}, 2 (2022), Paper No. 15, 24.

\bibitem{OrbitalBeta}
{\sc Assiotis, T., and Najnudel, J.}
\newblock The boundary of the orbital beta process.
\newblock {\em Mosc. Math. J. 21}, 4 (2021), 659--694.

\bibitem{BorceaBranden}
{\sc Borcea, J., and Br\"{a}nd\'{e}n, P.}
\newblock P\'{o}lya-{S}chur master theorems for circular domains and their
  boundaries.
\newblock {\em Ann. of Math. (2) 170}, 1 (2009), 465--492.

\bibitem{BorodinOlshanskiErgodic}
{\sc Borodin, A., and Olshanski, G.}
\newblock Infinite random matrices and ergodic measures.
\newblock {\em Comm. Math. Phys. 223}, 1 (2001), 87--123.

\bibitem{BorodinOlshanskiHarmonic}
{\sc Borodin, A., and Olshanski, G.}
\newblock Harmonic analysis on the infinite-dimensional unitary group and
  determinantal point processes.
\newblock {\em Ann. of Math. (2) 161}, 3 (2005), 1319--1422.

\bibitem{ChhaibiHovhannisyanEtAl}
{\sc Chhaibi, R., Hovhannisyan, E., Najnudel, J., Nikeghbali, A., and Rodgers,
  B.}
\newblock The limiting characteristic polynomial of classical random matrix
  ensembles.
\newblock {\em Ann. Henri Poincar\'{e} 20}, 4 (2019), 1093--1119.

\bibitem{ChhaibiNajnudelNikeghbali}
{\sc Chhaibi, R., Najnudel, J., and Nikeghbali, A.}
\newblock The circular unitary ensemble and the {R}iemann zeta function: the
  microscopic landscape and a new approach to ratios.
\newblock {\em Invent. Math. 207}, 1 (2017), 23--113.

\bibitem{CuencaOrbital}
{\sc Cuenca, C.}
\newblock {Universal Behavior of the Corners of Orbital Beta Processes}.
\newblock {\em International Mathematics Research Notices 2021}, 19 (11 2019),
  14761--14813.

\bibitem{Dufresne}
{\sc Dufresne, D.}
\newblock The distribution of a perpetuity, with applications to risk theory
  and pension funding.
\newblock {\em Scand. Actuar. J.}, 1-2 (1990), 39--79.

\bibitem{HardToSoft}
{\sc Dumaz, L., Li, Y., and Valk\'{o}, B.}
\newblock Operator level hard-to-soft transition for {$\beta$}-ensembles.
\newblock {\em Electron. J. Probab. 26\/} (2021), Paper No. 39, 28.

\bibitem{ForresterBook}
{\sc Forrester, P.~J.}
\newblock {\em Log-gases and random matrices}, vol.~34 of {\em London
  Mathematical Society Monographs Series}.
\newblock Princeton University Press, Princeton, NJ, 2010.

\bibitem{ForresterJoint}
{\sc Forrester, P.~J.}
\newblock Joint moments of a characteristic polynomial and its derivative for
  the circular $\beta$-ensemble.
\newblock {\em arXiv preprint arXiv:2012.08618\/} (2020).

\bibitem{Widder}
{\sc Hirschman, I.~I., and Widder, D.~V.}
\newblock {\em The convolution transform}.
\newblock Princeton University Press, Princeton, N. J., 1955.

\bibitem{Kallenberg}
{\sc Kallenberg, O.}
\newblock {\em Foundations of modern probability}, second~ed.
\newblock Probability and its Applications (New York). Springer-Verlag, New
  York, 2002.

\bibitem{TotalPositivity}
{\sc Karlin, S.}
\newblock {\em Total positivity. {V}ol. {I}}.
\newblock Stanford University Press, Stanford, Calif, 1968.

\bibitem{KeatingSnaith}
{\sc Keating, J.~P., and Snaith, N.~C.}
\newblock Random matrix theory and {$\zeta(1/2+it)$}.
\newblock {\em Comm. Math. Phys. 214}, 1 (2000), 57--89.

\bibitem{KerovOkounkovOlshanski}
{\sc Kerov, S., Okounkov, A., and Olshanski, G.}
\newblock The boundary of the {Y}oung graph with {J}ack edge multiplicities.
\newblock {\em Internat. Math. Res. Notices}, 4 (1998), 173--199.

\bibitem{UniversalityStochasticAiry}
{\sc Krishnapur, M., Rider, B., and Vir\'{a}g, B.}
\newblock Universality of the stochastic {A}iry operator.
\newblock {\em Comm. Pure Appl. Math. 69}, 1 (2016), 145--199.

\bibitem{Laguerre}
{\sc Laguerre, E.}
\newblock Fonctions du genre z\'{e}ro et du genre un.
\newblock {\em {C}. {R}. {A}cad. {S}ci. {P}aris 95\/} (1882), 828--831.

\bibitem{Lambert2020strong}
{\sc Lambert, G., and Paquette, E.}
\newblock Strong approximation of {G}aussian $\beta$-ensemble characteristic
  polynomials: the hyperbolic regime.
\newblock {\em arxiv:2001.09042\/} (2020).

\bibitem{StochasticAiryFunction}
{\sc Lambert, G., and Paquette, E.}
\newblock Strong approximation of {G}aussian $\beta$-ensemble characteristic
  polynomials: the edge regime and the stochastic {A}iry function.
\newblock {\em arxiv:2009.05003\/} (2021).

\bibitem{LevinEntire}
{\sc Levin, B.~J.}
\newblock {\em Distribution of zeros of entire functions}, revised~ed., vol.~5
  of {\em Translations of Mathematical Monographs}.
\newblock American Mathematical Society, Providence, R.I., 1980.
\newblock Translated from the Russian by R. P. Boas, J. M. Danskin, F. M.
  Goodspeed, J. Korevaar, A. L. Shields and H. P. Thielman.

\bibitem{ValkoLin}
{\sc Li, Y., and Valk{\'o}, B.}
\newblock Operator level limit of the circular {J}acobi $\beta$-ensemble.
\newblock {\em arXiv preprint arXiv:2108.11039\/} (2021).

\bibitem{LindwartPolya}
{\sc Lindwart, E., and P{\'o}lya, G.}
\newblock {\"u}ber einen zusammenhang zwischen der konvergenz von polynomfolgen
  und der verteilung ihrer wurzeln.
\newblock {\em Rendiconti del Circolo Matematico di Palermo (1884-1940) 37\/},
  297--304.

\bibitem{MaplesNajnudelNikeghbali}
{\sc Maples, K., Najnudel, J., and Nikeghbali, A.}
\newblock Strong convergence of eigenangles and eigenvectors for the circular
  unitary ensemble.
\newblock {\em Ann. Probab. 47}, 4 (2019), 2417--2458.

\bibitem{Montgomery}
{\sc Montgomery, H.~L.}
\newblock The pair correlation of zeros of the zeta function.
\newblock In {\em Analytic number theory ({P}roc. {S}ympos. {P}ure {M}ath.,
  {V}ol. {XXIV}, {S}t. {L}ouis {U}niv., {S}t. {L}ouis, {M}o., 1972)\/} (1973),
  pp.~181--193.

\bibitem{NajnudelNikeghbali}
{\sc Najnudel, J., and Nikeghbali, A.}
\newblock Convergence of random holomorphic functions with real zeros and
  extensions of the stochastic zeta function.
\newblock {\em arxiv:2202.04284\/} (2022).

\bibitem{Neretin}
{\sc Neretin, Y.~A.}
\newblock Hua-type integrals over unitary groups and over projective limits of
  unitary groups.
\newblock {\em Duke Math. J. 114}, 2 (2002), 239--266.

\bibitem{NeretinRayleigh}
{\sc Neretin, Y.~A.}
\newblock Rayleigh triangles and nonmatrix interpolation of matrix beta
  integrals.
\newblock {\em Mat. Sb. 194}, 4 (2003), 49--74.

\bibitem{OlshanskiHarmonic}
{\sc Olshanski, G.}
\newblock The problem of harmonic analysis on the infinite-dimensional unitary
  group.
\newblock {\em J. Funct. Anal. 205}, 2 (2003), 464--524.

\bibitem{OlshanskiVershik}
{\sc Olshanski, G., and Vershik, A.}
\newblock Ergodic unitarily invariant measures on the space of infinite
  {H}ermitian matrices.
\newblock In {\em Contemporary mathematical physics}, vol.~175 of {\em Amer.
  Math. Soc. Transl. Ser. 2}. Amer. Math. Soc., Providence, RI, 1996,
  pp.~137--175.

\bibitem{Pickrell}
{\sc Pickrell, D.}
\newblock Mackey analysis of infinite classical motion groups.
\newblock {\em Pacific J. Math. 150}, 1 (1991), 139--166.

\bibitem{PolyaOriginal}
{\sc Pólya, G.}
\newblock \"{U}ber {A}nn\"{a}herung durch {P}olynome mit lauter reellen
  {W}urzeln.
\newblock {\em Rend. Circ. Mat. Palermo 36\/} (1913), 279--295.

\bibitem{Qiu}
{\sc Qiu, Y.}
\newblock Infinite random matrices \& ergodic decomposition of finite and
  infinite {H}ua-{P}ickrell measures.
\newblock {\em Adv. Math. 308\/} (2017), 1209--1268.

\bibitem{RiderRamirez}
{\sc Ram\'{\i}rez, J.~A., and Rider, B.}
\newblock Spiking the random matrix hard edge.
\newblock {\em Probab. Theory Related Fields 169}, 1-2 (2017), 425--467.

\bibitem{StochasticAiry}
{\sc Ram\'{\i}rez, J.~A., Rider, B., and Vir\'{a}g, B.}
\newblock Beta ensembles, stochastic {A}iry spectrum, and a diffusion.
\newblock {\em J. Amer. Math. Soc. 24}, 4 (2011), 919--944.

\bibitem{RiderValko}
{\sc Rider, B., and Valk\'{o}, B.}
\newblock Matrix {D}ufresne identities.
\newblock {\em Int. Math. Res. Not. IMRN}, 1 (2016), 174--218.

\bibitem{UniversalityStochasticBessel}
{\sc Rider, B., and Waters, P.}
\newblock Universality of the stochastic {B}essel operator.
\newblock {\em Probab. Theory Related Fields 175}, 1-2 (2019), 97--140.

\bibitem{PolyaSchur}
{\sc Schur, J., and P\'{o}lya, G.}
\newblock \"{U}ber zwei {A}rten von {F}aktorenfolgen in der {T}heorie der
  algebraischen {G}leichungen.
\newblock {\em J. Reine Angew. Math. 144\/} (1914), 89--113.

\bibitem{SimonFredholm}
{\sc Simon, B.}
\newblock Notes on infinite determinants of {H}ilbert space operators.
\newblock {\em Advances in Math. 24}, 3 (1977), 244--273.

\bibitem{Sodin}
{\sc Sodin, S.}
\newblock On the critical points of random matrix characteristic polynomials
  and of the {R}iemann {$\xi$}-function.
\newblock {\em Q. J. Math. 69}, 1 (2018), 183--210.

\bibitem{ValkoVirag}
{\sc Valk\'{o}, B., and Vir\'{a}g, B.}
\newblock The {$\rm Sine_\beta$} operator.
\newblock {\em Invent. Math. 209}, 1 (2017), 275--327.

\bibitem{ValkoViragZeta}
{\sc Valk{\'o}, B., and Vir{\'a}g, B.}
\newblock The many faces of the stochastic zeta function.
\newblock {\em arXiv preprint arXiv:2009.04670\/} (2020).

\bibitem{ValkoViragOperatorLimit}
{\sc Valk\'{o}, B., and Vir\'{a}g, B.}
\newblock Operator limit of the circular beta ensemble.
\newblock {\em Ann. Probab. 48}, 3 (2020), 1286--1316.

\bibitem{VershikKerovSymmetric}
{\sc Vershik, A.~M., and Kerov, S.~V.}
\newblock Asymptotic theory of the characters of a symmetric group.
\newblock {\em Funktsional. Anal. i Prilozhen. 15}, 4 (1981), 15--27, 96.

\bibitem{VershikKerovUnitary}
{\sc Vershik, A.~M., and Kerov, S.~V.}
\newblock Characters and factor-representations of the infinite unitary group.
\newblock {\em Dokl. Akad. Nauk SSSR 267}, 2 (1982), 272--276.

\bibitem{ViragICM}
{\sc Vir\'{a}g, B.}
\newblock Operator limits of random matrices.
\newblock In {\em Proceedings of the {I}nternational {C}ongress of
  {M}athematicians---{S}eoul 2014. {V}ol. {IV}\/} (2014), Kyung Moon Sa, Seoul,
  pp.~247--271.

\end{thebibliography}

\bigskip 

\noindent{\sc School of Mathematics, University of Edinburgh, James Clerk Maxwell Building, Peter Guthrie Tait Rd, Edinburgh EH9 3FD, U.K.}\newline
\href{mailto:theo.assiotis@ed.ac.uk}{\small theo.assiotis@ed.ac.uk}

\end{document}